\newfont{\bb}{msbm10 at 12pt}
\def\r{\hbox{\bb R}}
\def\h{\hbox{\bb H}}
\def\s{\hbox{\bb S}}
\def\e{\hbox{\bf{E}}}
\def\t{\hbox{\bf T}}
\def\n{\hbox{\bf N}}
\def\b{\hbox{\bf B}}
\newtheorem{theorem}{Theorem}[section]
\newtheorem{definition}[theorem]{Definition}
\newtheorem{proposition}[theorem]{Proposition}
\newtheorem{remark}[theorem]{Remark}
\newtheorem{corollary}[theorem]{Corollary}
\title{Constant angle surfaces in Minkowski space}
\author{ Rafael L\'opez\thanks{Partially
supported by MEC-FEDER
 grant no. MTM2007-61775 and
Junta de Andaluc\'{\i}a grant no. P06-FQM-01642.}\\
Departamento de Geometr\'{\i}a y Topolog\'{\i}a\\
Universidad de Granada, Spain\\
email: {\tt rcamino@ugr.es}\\
\\
Marian Ioan Munteanu\thanks{The author was supported by grant PN-II ID 398/2007-2010 (Romania)}\\
 Universitatea 'Al.I.Cuza' Ia\c si,
Facultatea de Matematic\u a\\
Bd. Carol I, n.11,
700506 Ia\c si,
Romania\\
email: {\tt marian.ioan.munteanu@gmail.com}}
\date{}
\begin{document}

\maketitle
\begin{abstract}
A constant angle surface in Minkowski space is a spacelike surface whose unit normal vector field makes a constant hyperbolic
angle with a fixed timelike vector. In this work we study and classify these surfaces. In particular, we show that they are flat.
Next we prove that a tangent developable surface (resp. cylinder, cone) is a constant angle surface if and only if the generating
curve is a helix (resp. a straight-line, a circle).\\[2mm]
{\bf Keywords and Phrases:} constant angle surfaces, Minkowski space, helix\\[2mm]
{\bf 2000 Mathematics Subject Classification:} 53B25
\end{abstract}

%%%%%%%%%%%%%%%%%%%%%%%%%%%%%%%%%%%%%%%%%%%%%%%
\section{Introduction and statement of results}
%%%%%%%%%%%%%%%%%%%%%%%%%%%%%%%%%%%%%%%%%%%%%%%

A constant angle surface in Euclidean three-dimensional space $\e^3$ is a surface whose tangent planes make a constant angle
with a fixed vector field of the ambient space. These surfaces generalize the concept of helix, that is, curves whose tangent
lines make a constant angle with a fixed vector of $\e^3$. This kind of surfaces are models to describe some phenomena in physics
of interfaces in liquids crystals and of layered fluids \cite{kn:CS}. Constant angle surfaces have been studied for arbitrary dimension
in Euclidean space $\e^n$ \cite{kn:DR,kn:R} and in different ambient spaces, e.g. $\s^2\times\r$, $\h^2\times\r$
and $\mbox{Nil}_3$ \cite{kn:DFVV07,kn:DM09,kn:FMV}.

In this work we extend the concept of constant angle surfaces in Lorentzian ambient space.
Let $\e_1^3$ denote the three-dimensional Minkowski space, that is, the real vector space $\r^{3}$ endowed with
the Lorentzian metric
$$\langle~,~\rangle=(dx_1)^2+(dx_2)^2-(dx_3^2),$$
where $(x_1,x_2,x_3)$ are the canonical coordinates in $\r^{3}$. In Minkowski space $\e_1^3$ and due to the variety
of causal character of a vector, there is not a natural concept of angle between two arbitrary vectors and only it is
possible to define the angle between {\em timelike} vectors.

Consider a (connected) surface $M$ and  a smooth immersion  $x: M\rightarrow \e_1^3$. We say that $x$ is a \emph{spacelike}
immersion if the induced metric on $M$  via $x$ is a Riemannian metric on each tangent plane. This is equivalent to say that
any unit normal vector field $\xi$ of $M$ is timelike at each point. In particular, if $x:M\rightarrow\e_1^3$ is a spacelike
immersion, then the surface $M$ is orientable.

\begin{definition}
Let {\sc $x:M\rightarrow\e_1^3$} be a spacelike immersion and let $\xi$ be a  unit normal vector field on $M$. We say that $M$ is a
constant angle surface if there is a fixed timelike  vector $U$ such that  $\xi$ makes a constant hyperbolic angle with $U$.
\end{definition}

In Theorem \ref{t-main} we do a local description of any constant angle spacelike surface. As a consequence,
we prove that they are ruled and flat surfaces (Corollary \ref{flat}). Thus they must be tangent developable
surfaces, cylinders and cones. In Section \ref{s-tangent} we deal with tangent surfaces showing in Theorem \ref{tangent} that

\begin{quote}{\em  A tangent developable surface is a constant angle surface if and only if the generating curve is a helix.}
\end{quote}
Finally we consider in Section \ref{s-cylinder} cylinders and cones. We show (see Theorems
\ref{t-cylinder} and \ref{t-cone})
\begin{quote}{\em  The only spacelike cylinders that are  constant angle surfaces are planes. A cone is a constant angle surface
if and only if the generating curve is a circle contained in a spacelike plane.}
\end{quote}

%%%%%%%%%%%%%%%%%%%%%%%%%%%%%%%%%%%%%%%%%%%%%%%
\section{Preliminaries} \label{preli}
%%%%%%%%%%%%%%%%%%%%%%%%%%%%%%%%%%%%%%%%%%%%%%%
Most of the following definitions can be found in the O'Neill's book \cite{kn:ON}.
Let $\e_1^3$ be the three-dimensional Minkowski space. A vector $v\in\e_1^3$ is said
spacelike if $\langle v,v\rangle>0$ or $v=0$,
timelike if $\langle v,v\rangle<0$,
and lightlike if $\langle v,v\rangle =0$ and $v\neq0$.
The norm (length) of a vector $v$ is given by $|v|=\sqrt{|\langle v,v\rangle|}$.

In Minkowski space $\e_1^3$ one can define the angle between two vectors only if both are timelike.  We describe this fact.
If  $u,v\in\e_1^3$ are two timelike vectors, then $\langle u,v\rangle\not=0$.
We say that $u$ and $v$ lie in the same timelike cone if $\langle u,v\rangle <0$.
This defines an equivalence binary relation with exactly two equivalence classes.
If $v$ lies in the same timelike cone than $E_3:=(0,0,1)$, we say that $v$ is future-directed.
For timelike vectors, we have the Cauchy-Schwarz inequality given by
$$ |\langle u,v\rangle|\geq \sqrt{-\langle u,u\rangle}\sqrt{-\langle v,v\rangle}$$
and the equality holds if and only if $u$ and $v$ are two proportional vectors.
In the case that both vectors lie in the same timelike cone, there exists a unique number $\theta\geq 0$ such that
$$\langle u,v\rangle=-|u| |v|\cosh(\theta).$$
This number $\theta$ is called  the \emph{hyperbolic angle} between $u$ and $v$.

\begin{remark} We point out that the above reasoning cannot work for other pairs of vectors, even if they are spacelike. For example,
the vectors $u=(\cosh(t),0,\sinh(t))$ and $v=(0,\cosh(t),\sinh(t))$ are spacelike vectors with $|u|=|v|=1$ for any $t$. However the number
$\langle u,v\rangle=-\sinh(t)^2$ takes arbitrary values from $0$ to $-\infty$. Thus, there is not $\theta\in\r$ such that
$\cos(\theta)=\langle u,v\rangle$.

\end{remark}

We also need to recall the notion of the Lorentzian cross-product $\times:\e^3_1\times\e^3_1 \rightarrow \e^3_1$.
If $u,v\in\e_1^3$, the vector $u\times v$ is defined as the unique one that satisfies $\langle u\times v,w\rangle=\mbox{det}(u,v,w)$,
where $\mbox{det}(u,v,w)$ is the determinant of the matrix whose columns are the vectors $u$, $v$ and $w$ with respect to the usual coordinates.
An easy computation gives
$$u\times v=
   (u_2v_3-u_3v_2, u_3v_1-u_1v_3,u_2v_1-u_1v_2).$$
As the cross-product in Euclidean 3-space, the Lorentzian cross-product has
 similar algebraic  properties, such as the anti-symmetric property or the fact that $u\times v$ is orthogonal both $u$ as $v$.

Let $x:M\rightarrow\e_1^3$ be an immersion of a surface  $M$ into $\e_1^3$. We say that $x$ is spacelike (resp. timelike, lightlike)
if the induced metric on $M$ via $x$ is Riemannian (resp. Lorentzian, degenerated). This is equivalent to assert that a (local) normal vector $\xi$ is
timelike (resp. spacelike, lightlike). As the concept of angle is given only for timelike vectors, we have to consider those immersions
whose unit normal vector is timelike, that is, {\it spacelike} immersions.
Let $x$ be a spacelike immersion. At any point $p\in M$, it is possible to choose a unit normal vector $\xi(p)$ such that $\xi(p)$
 is future-directed, i.e. $\langle\xi(p),E_3\rangle<0$. This shows that if $x$ is a spacelike immersion, the surface $M$ is orientable.

Denote $\mathfrak{X}(M)$ the space of tangent vector fields on $M$. Let $X, Y\in \mathfrak{X}(M)$. We write by  $\overset{\sim}{\nabla}$ and $\nabla$
the Levi-Civita connections of  $\e_1^3$ and $M$ respectively.
Moreover,
$$\nabla_X Y=(\overset{\sim}{\nabla}_X Y)^\top$$
where the superscript $^\top$  denotes the tangent part of the vector field $\nabla_X Y$. We define the second fundamental form of $x$ as
the tensorial, symmetric map $\sigma:\mathfrak{X}(M)\times\mathfrak{X}(M)\rightarrow {\mathfrak X}(M)^\bot$ given by
$$\sigma(X,Y)=(\overset{\sim}{\nabla}_X Y)^\bot$$
where by ${}^\bot$ we mean the normal part. The Gauss formula can be written as
\begin{equation}\label{gau}
\overset{\sim}{\nabla}_X Y=\nabla_X Y+\sigma(X,Y).
\end{equation}
We denote by $A_\xi (X)=A(X)$ the tangent component of
 $-\overset{\sim}{\nabla}_X \xi$, that is, $A_\xi(X)=-(\overset{\sim}{\nabla}_X \xi)^\top$.
Because  $\langle\overset{\sim}{\nabla}_X \xi,\xi\rangle=0$, we have  the so-called Weingarten formula
\begin{equation}\label{wei}
\overset{\sim}{\nabla}_X \xi=-A_\xi(X).
\end{equation}
 The map  $A:\mathfrak{X}(M)\rightarrow\mathfrak{X}(M)$ is called the \emph{Weingarten endomorphism} of the immersion
 $x$ and $\sigma$ is the second fundamental form of $x$. We have then
 $\langle AX,Y\rangle=\langle X,AY\rangle$. As a consequence, the Weingarten endomorphism is diagonalizable, that is, if $p\in M$, the map  $A_p:T_p M\rightarrow T_pM$ defined by
$A_p(v)=(AX)_p$ is diagonalizable, where  $X\in\mathfrak{X}(M)$ is a vector field that extends  $v$. The eigenvalues of $A_p$ are called the
{\it principal curvatures} and they  will be denoted by  $\lambda_i(p)$.
Moreover, if $X,Y\in \mathfrak{X}(M)$, we have $\langle A(X),Y\rangle=\langle\sigma(X,Y),\xi\rangle$ and
$$\sigma(X,Y)=-\langle\sigma(X,Y),\xi\rangle \xi=-\langle A(X),Y\rangle \xi.$$
$$\overset{\sim}{\nabla}_XY=\nabla_X Y-\langle A(X),Y\rangle \xi.$$

Let $\{v_1,v_2\}$ be a basis in the tangent plane $T_p M$ and we denote
$$\sigma_{ij}=\langle \sigma(v_i,v_j),\xi\rangle=\langle A(v_i),v_j\rangle.$$
If we assume that this basis is orthonormal, we have from \eqref{gau} and \eqref{wei}
\begin{equation}\label{vij1}
\overset{\sim}{\nabla}_{v_i}V_j=\nabla_{v_i}V_j-\sigma_{ij}\xi.
\end{equation}
\begin{equation}\label{vij2}
\overset{\sim}{\nabla}_{v_i}\xi=\sigma_{i1}v_1+\sigma_{i2}v_2.
\end{equation}
where $V_i$ is a tangent vector field that extends $v_i$.

%%%%%%%%%%%%%%%%%%%%%%%%%%%%%%%%%%%%%%%%%%%%%%%
\section{Classification of constant angle surfaces in $\e_1^3$}
\label{section-main}
%%%%%%%%%%%%%%%%%%%%%%%%%%%%%%%%%%%%%%%%%%%%%%%

Let $M$ be a constant angle spacelike surface in $\e_1^3$ whose unit normal vector field $\xi$ is assumed to be future-directed.
Without loss of generality, we assume that $U$ is a unitary vector and after an isometry of the ambient space, we can take $U$ as $E_3$.
Denote by $\theta$ the hyperbolic angle between $\xi$ and $U$,
 that is, $\cosh(\theta)=-\langle\xi,U\rangle$. If $\theta=0$, then
 $\xi=U$ on $M$. This means that  $x$ describes the  immersion of an affine plane parallel to $Ox^1x^2$.
 Throughout this work, we discard the trivial case that $\theta=0$.

We decompose $U$ as
$$U=U^\top+\cosh(\theta) \xi
$$
where $U^\top$ is the projection of $U$ on the tangent plane of $M$. Let
$$e_1=\frac{U^\top}{|U^\top|},$$
which defines a unit tangent vector field on $M$ and consider $e_2$ a unit  vector field on $M$ orthogonal to $e_1$ in such a way that
$\{e_1,e_2,\xi\}$ defines an oriented unit orthonormal  basis for every point of $M$. We write now the vector $U$ in the following form
\begin{equation}
\label{uu}
U=\sinh(\theta) e_1+\cosh(\theta) \xi.
\end{equation}
As $U$ is a constant vector field, $\overset{\sim}{\nabla}_{e_2}U=0$ and \eqref{uu} gives
\begin{equation}\label{e21}
\sinh(\theta)\overset{\sim}{\nabla}_{e_2}e_1+\cosh(\theta)\overset{\sim}{\nabla}_{e_2}\xi=0.
\end{equation}
Taking the normal component and using \eqref{vij1}, we obtain
$$\sinh(\theta)\langle \overset{\sim}{\nabla}_{e_2}e_1,\xi\rangle=-\sinh(\theta)\sigma_{21}=0.$$
Since $\theta\not=0$, we conclude $\sigma_{21}=\sigma_{12}=0$. By combining \eqref{vij2} and \eqref{e21}, it follows  that
$$\overset{\sim}{\nabla}_{e_2}e_1=-\coth(\theta) \sigma_{22}\ e_2.$$
Analogously, we have $\overset{\sim}{\nabla}_{e_1}U=0$ and \eqref{uu} yields
$$\sinh(\theta)\ \overset{\sim}{\nabla}_{e_1}e_1+\cosh(\theta)\ \overset{\sim}{\nabla}_{e_1}\xi=0.$$
The normal component of the above expression together \eqref{vij1} gives $\sigma_{11}\sinh(\theta)=0$, that is,
$\sigma_{11}=0$. We can summarize the above computations with a description of $\nabla$ as follows:

\begin{theorem} \label{t1} With the above notations, the  Levi-Civita connection $\nabla$ for  a constant angle spacelike surface in {\sc $\e_1^3$} is given by
\begin{eqnarray*}
& &\nabla_{e_1} e_1=0.\\
& &\nabla_{e_1}e_2=0,\ \nabla_{e_2}e_1=-\coth(\theta)\sigma_{22}\ e_2.\\
& &\nabla_{e_2}e_2=\coth(\theta)\sigma_{22}\ e_1.
\end{eqnarray*}
Moreover, with respect to $\{e_1,e_2\}$, the Weingarten map takes the form
$$\left(\begin{array}{cc}
0&0\\0&-\sigma_{22}
\end{array}
\right).$$
\end{theorem}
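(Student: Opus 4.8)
The plan is to exploit the single structural hypothesis that $U$ is parallel in the ambient space, namely $\overset{\sim}{\nabla}_X U=0$ for every tangent $X$, read against the adapted orthonormal frame $\{e_1,e_2,\xi\}$. Writing $U=\sinh(\theta)e_1+\cosh(\theta)\xi$ as in \eqref{uu} and differentiating along $e_1$ and $e_2$ converts the constancy of $U$ into two vector identities; the whole theorem then falls out by splitting each identity into its normal and tangential parts relative to the timelike normal $\xi$. I would organize the work around exactly these projections, treating the normal components first (to pin down $\sigma$ and the Weingarten map) and the tangential components second (to read off $\nabla$).

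For the normal parts, applying \eqref{vij1} to the identities coming from $\overset{\sim}{\nabla}_{e_1}U=0$ and $\overset{\sim}{\nabla}_{e_2}U=0$ produces $\sigma_{11}\sinh(\theta)=0$ and $\sigma_{21}\sinh(\theta)=0$; since $\theta\neq 0$ these force $\sigma_{11}=\sigma_{12}=\sigma_{21}=0$, precisely as already recorded before the statement, leaving $\sigma_{22}$ as the only possibly nonzero entry. The Weingarten matrix is then immediate from \eqref{wei} and \eqref{vij2}: one gets $A e_1=-\overset{\sim}{\nabla}_{e_1}\xi=-(\sigma_{11}e_1+\sigma_{12}e_2)=0$ and $A e_2=-\overset{\sim}{\nabla}_{e_2}\xi=-\sigma_{22}e_2$, which is the asserted form $\mathrm{diag}(0,-\sigma_{22})$.

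For the tangential parts, which encode $\nabla$, I would argue as follows. From \eqref{vij2} with $\sigma_{11}=\sigma_{12}=0$ we have $\overset{\sim}{\nabla}_{e_1}\xi=0$, so the identity $\sinh(\theta)\overset{\sim}{\nabla}_{e_1}e_1+\cosh(\theta)\overset{\sim}{\nabla}_{e_1}\xi=0$ collapses to $\overset{\sim}{\nabla}_{e_1}e_1=0$, whence $\nabla_{e_1}e_1=(\overset{\sim}{\nabla}_{e_1}e_1)^\top=0$. Inserting \eqref{vij2} into \eqref{e21} reproduces $\overset{\sim}{\nabla}_{e_2}e_1=-\coth(\theta)\sigma_{22}e_2$, which is purely tangential since $\sigma_{21}=0$, so $\nabla_{e_2}e_1=-\coth(\theta)\sigma_{22}e_2$. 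The two remaining coefficients follow solely from $\{e_1,e_2\}$ being orthonormal for the Riemannian induced metric: differentiating $\langle e_i,e_j\rangle=\delta_{ij}$ gives $\langle\nabla_{e_i}e_2,e_2\rangle=0$ and $\langle\nabla_{e_i}e_2,e_1\rangle=-\langle e_2,\nabla_{e_i}e_1\rangle$, yielding $\nabla_{e_1}e_2=0$ and $\nabla_{e_2}e_2=\coth(\theta)\sigma_{22}e_1$.

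The computation is essentially bookkeeping, and the one place demanding genuine care is the tangential/normal splitting against a \emph{timelike} unit normal: the sign $\langle\xi,\xi\rangle=-1$ is exactly what produces $\coth(\theta)$ rather than $\tanh(\theta)$ in the connection and governs the sign in the Weingarten matrix. I would therefore verify each projection by pairing separately with $\xi$, $e_1$ and $e_2$ rather than trusting Euclidean intuition. Once the adapted frame and the parallelism of $U$ are in place, no substantive obstacle remains.
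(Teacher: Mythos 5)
Your proof is correct and follows essentially the same route as the paper: decompose $U=\sinh(\theta)\,e_1+\cosh(\theta)\,\xi$, differentiate along $e_1$ and $e_2$, and split into normal parts (forcing $\sigma_{11}=\sigma_{12}=\sigma_{21}=0$ and yielding the Weingarten matrix) and tangential parts (yielding the connection coefficients). The only addition is that you explicitly derive $\nabla_{e_1}e_2=0$ and $\nabla_{e_2}e_2=\coth(\theta)\sigma_{22}\,e_1$ from metric compatibility of the orthonormal frame, a step the paper leaves implicit when it ``summarizes the above computations.''
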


\begin{corollary}\label{c1}
 Given a constant angle spacelike surface $M$ in {\sc $\e_1^3$}, there exist local coordinates $u$ and $v$ such that
the metric on $M$ writes as  $\langle~,~\rangle=du^2+\beta^2 dv^2$, where $\beta=\beta(u,v)$ is a smooth function on $M$, i.e. the
coefficients of the first fundamental are $E=1$, $F=0$ and $G=\beta^2$.
\end{corollary}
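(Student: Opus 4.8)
The plan is to convert the connection data of Theorem \ref{t1} into a commuting coordinate frame. First I would read off from Theorem \ref{t1} that $\nabla_{e_1}e_1=0$ and $\nabla_{e_1}e_2=0$, so that the Lie bracket is
$$[e_1,e_2]=\nabla_{e_1}e_2-\nabla_{e_2}e_1=\coth(\theta)\,\sigma_{22}\,e_2,$$
which is everywhere proportional to $e_2$. Writing $\phi=\coth(\theta)\,\sigma_{22}$, the whole construction rests on this observation: the orthonormal frame $\{e_1,e_2\}$ fails to commute only in the $e_2$-direction, so rescaling $e_2$ alone should be enough to produce a coordinate frame.

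Next I would look for a positive smooth function $\beta$ on $M$ such that $\bar e_2:=\beta e_2$ commutes with $e_1$. A direct computation gives
$$[e_1,\beta e_2]=e_1(\beta)\,e_2+\beta[e_1,e_2]=\bigl(e_1(\beta)+\beta\,\phi\bigr)e_2,$$
so the commutation $[e_1,\bar e_2]=0$ is equivalent to the linear first-order equation $e_1(\beta)=-\phi\,\beta$. Since $e_1$ is a unit, hence nonvanishing, vector field, its integral curves are geodesics by $\nabla_{e_1}e_1=0$, and along each of them the relation is an ordinary linear ODE; prescribing $\beta=1$ on a local transversal and integrating yields a solution depending smoothly on the initial point. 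The expected main obstacle is exactly this step — one must check that the solution is well defined and strictly positive on a full coordinate neighborhood — but positivity is automatic, since along each integral curve the solution has the form $\beta=\exp\bigl(-\int\phi\bigr)$.

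With $\beta$ in hand, $e_1$ and $\bar e_2=\beta e_2$ are two pointwise linearly independent vector fields (by orthonormality of $\{e_1,e_2\}$ and $\beta>0$) that commute, so the standard theorem on commuting flows furnishes local coordinates $(u,v)$ with $\partial_u=e_1$ and $\partial_v=\beta e_2$. Finally I would compute the first fundamental form in these coordinates using that $\{e_1,e_2\}$ is orthonormal for the induced Riemannian metric: $E=\langle\partial_u,\partial_u\rangle=1$, $F=\langle\partial_u,\partial_v\rangle=\beta\langle e_1,e_2\rangle=0$, and $G=\langle\partial_v,\partial_v\rangle=\beta^2$, which is precisely the claimed metric $du^2+\beta^2\,dv^2$. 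Equivalently, one may argue through the dual coframe $\{\omega^1,\omega^2\}$: the same connection data give $d\omega^1=0$, so $\omega^1=du$ locally, while on a surface every nonvanishing $1$-form admits an integrating factor, giving $\omega^2=\beta\,dv$ and the same conclusion.
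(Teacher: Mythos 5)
Your proposal is correct and fills in, with full details, exactly the argument the paper leaves implicit: Corollary \ref{c1} is deduced from the connection data of Theorem \ref{t1} by rescaling $e_2$ so that the frame commutes, and the resulting coordinates $(u,v)$ with $\partial_u=e_1$, $\partial_v=\beta e_2$ are precisely the ones the paper uses afterwards (where $x_u$ is unit and $A(x_u)=0$). Both the bracket computation $[e_1,e_2]=\coth(\theta)\sigma_{22}\,e_2$ and the integration of $e_1(\beta)=-\coth(\theta)\sigma_{22}\,\beta$ along the $e_1$-geodesics are sound, so nothing is missing.
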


Now, we will consider that the parametrization $x(u,v)$ given by the above Corollary.
We know that $A(x_u)=0$ and $\sigma_{11}=\sigma_{12}=0$. From Theorem \ref{t1} one obtains
\begin{eqnarray*}
x_{uu}&=&0\\
x_{uv}&=&\frac{\beta_u}{\beta}x_v\\
x_{vv}&=&-\beta\beta_u\ x_u+\frac{\beta_v}{\beta}\ x_v+\beta^2\sigma_{22}\xi
\end{eqnarray*}
On the other hand, we have
\begin{eqnarray*}
\xi_u&=&\overset{\sim}{\nabla}_{x_u}\xi=0.\\
\xi_v&=&\overset{\sim}{\nabla}_{x_v}\xi=\beta\sigma_{22} e_2=\sigma_{22}\ x_v.\\
\end{eqnarray*}
As $\xi_{uv}=\xi_{vu}=0$, it follows $\overset{\sim}{\nabla}_{x_u}(\sigma_{22} x_v)=0$. Using the fact that $\sigma_{12}=0$,
$\overset{\sim}{\nabla}_{x_u}x_v=\overset{\sim}{\nabla}_{x_v}x_u$ and Theorem  \ref{t1},
we obtain
$$0=(\sigma_{22})_u x_v+\sigma_{22}\overset{\sim}{\nabla}_{x_u}x_v=
(\sigma_{22})_u x_v-\coth(\theta)\sigma_{22}^2 x_v.$$
Therefore
\begin{equation}\label{b1}
(\sigma_{22})_u -\coth(\theta)\sigma_{22}^2=0.
\end{equation}
Also, we use the expression of $x_{uv}$ to conclude that
$$(\sigma_{22})_u+\sigma_{22}\frac{\beta_u}{\beta}=0
$$
that is, $(\beta\sigma_{22})_u=0$ and then, there exists a smooth function $\varphi=\varphi(v)$ depending only on $v$ such that
\begin{equation}\label{b2}
\beta\sigma_{22}=\varphi(v).
\end{equation}
 Moreover, by combining  \eqref{b1} and \eqref{b2}, we have
$$\frac{\beta_u}{\beta}=-\coth(\theta)\sigma_{22}.$$

\begin{proposition}\label{pr-f} Consider a constant angle spacelike surface $x=x(u,v)$ in {\sc $\e_1^3$} where $(u,v)$ are
the coordinates given in Corollary $\ref{c1}$. If $\sigma_{22}=0$ on $M$, then $x$ describes an affine plane.
\end{proposition}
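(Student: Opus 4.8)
The plan is to specialize the structure equations already derived in the discussion preceding the statement to the hypothesis $\sigma_{22}\equiv 0$ and then read off directly that the immersion is affine. Conceptually, since we already know $\sigma_{11}=\sigma_{12}=0$, the extra assumption $\sigma_{22}=0$ makes the whole second fundamental form vanish, so $M$ is totally geodesic in the flat space $\e_1^3$ and must therefore be (a piece of) an affine plane; the explicit computation below simply realizes this.

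First I would exploit the relation $\beta_u/\beta=-\coth(\theta)\sigma_{22}$, which forces $\beta_u=0$ the moment $\sigma_{22}\equiv 0$, so that $\beta$ depends only on $v$. Substituting $\sigma_{22}=0$ and $\beta_u=0$ into the three Gauss equations for $x_{uu}$, $x_{uv}$, $x_{vv}$ collapses them to $x_{uu}=0$, $x_{uv}=0$, and $x_{vv}=(\beta_v/\beta)\,x_v$; note that the normal term in $x_{vv}$ disappears because it carried the factor $\beta^2\sigma_{22}$. From $x_{uu}=0$ and $x_{uv}=0$ I conclude that $x_u$ is annihilated by both partial derivatives and is therefore a constant vector, say $a$. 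Hence $x(u,v)=u\,a+c(v)$, where $c(v)=x(0,v)$.

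It remains to analyze the curve $c$. The surviving equation reads $c''(v)=(\beta_v/\beta)\,c'(v)$, which says the acceleration of $c$ is everywhere proportional to its velocity. Writing $c'=\rho\,b$ with $b$ a spacelike unit vector (legitimate because $\langle x_v,x_v\rangle=\beta^2>0$), differentiating $c'=\rho b$ gives $c''=\rho' b+\rho\,b'$, and proportionality of $c''$ to $c'=\rho b$ forces $b'$ to be a multiple of $b$. Since $\langle b,b\rangle=1$ implies $\langle b,b'\rangle=0$ and $b$ is non-null, the only such multiple is zero, so $b'=0$ and $c'$ keeps a fixed direction $b$. Thus $c$ traces a straight line and $x(u,v)=u\,a+\gamma(v)\,b+\text{const}$ with $a,b$ constant. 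Finally, the induced metric $du^2+\beta^2dv^2$ gives $\langle a,a\rangle=1$ and $\langle a,b\rangle=0$ with $b\neq 0$, so $a$ and $b$ are linearly independent and $x$ parametrizes the affine plane through a fixed point spanned by $a$ and $b$.

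I expect the only delicate point to be the middle step, namely deducing that $c$ is a straight line from $c''\parallel c'$, where one must be slightly careful because the computation takes place in a Lorentzian rather than Euclidean setting. The argument nevertheless goes through cleanly precisely because $b$ is \emph{spacelike}, hence non-null, so that orthogonality to $b$ combined with proportionality to $b$ still forces $b'=0$; everything else is a routine reading-off of the simplified structure equations.
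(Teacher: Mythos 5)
Your proof is correct, and after the shared opening it diverges from the paper's argument. Both proofs begin the same way: $\sigma_{22}=0$ forces $\beta_u=0$ via $\beta_u/\beta=-\coth(\theta)\sigma_{22}$, hence $x_{uu}=0$ and $x_{uv}=0$, so $x_u$ is a constant vector. At that point the paper finishes in one line by invoking the decomposition $U=\sinh(\theta)\,e_1+\cosh(\theta)\,\xi$ of equation \eqref{uu}: since $U$ is constant and $e_1=x_u$ is now constant, $\xi$ itself is a constant vector field, and a surface with constant unit normal lies in a plane. You instead ignore $U$ entirely and integrate the remaining structure equation: writing $x=u\,a+c(v)$, the relation $c''=(\beta_v/\beta)\,c'$ together with your (correct) observation that a unit \emph{spacelike} direction field $b$ satisfying $b'\parallel b$ and $\langle b,b'\rangle=0$ must be constant shows that $c$ is a straight line, so the image lies in the affine plane spanned by $a$ and $b$. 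The paper's route is shorter and exploits the constant-angle hypothesis directly through the fixed vector $U$; yours is self-contained at the level of the Gauss equations, exhibits the plane explicitly with an orthonormal spanning pair, and in effect carries out the totally geodesic argument you sketch at the start ($\sigma\equiv 0$ in flat ambient space implies plane), which is the most conceptual way to see the result. Your care about the Lorentzian signature in the step $b'=0$ is exactly the right concern, and it is handled correctly since $b$ is non-null.
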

\begin{proof}
We know that $\beta_u=0$ on $M$. Thus $x_{uv}=0$ and hence, $x_u$ is a constant vector.
From \eqref{uu}, $\xi$ is a constant vector field along $M$, and so, $x$ parametrizes a (spacelike) plane.
\end{proof}
Here and in the rest of the work, we will assume that  $\sigma_{22}\not=0$.
By solving  equation \eqref{b1}, we obtain  a function $\alpha=\alpha(v)$ such that
$$\sigma_{22}(u,v)=\frac{1}{-\coth(\theta)\  u+\alpha(v)}.$$
Then \eqref{b2} yields
$$\beta(u,v)=\varphi(v)\Big(-\coth(\theta)\  u+\alpha(v)\Big).$$

Consequently,
\begin{eqnarray}
x_{uu}&=&0\label{x1}\\
x_{uv}&=&\frac{\coth(\theta)}{\coth(\theta)u-\alpha(v)}\ x_v\label{x2}\\
x_{vv}&=&\varphi^2\coth(\theta)(-\coth(\theta)u+\alpha)x_u\nonumber\\
&+&\Big(\frac{\varphi'}{\varphi}+\frac{\alpha'}{-\coth(\theta)u+\alpha}\Big)x_v+\varphi^2(-\coth(\theta)u+\alpha)\xi.\label{x3}
\end{eqnarray}
From \eqref{uu} we have
$$\langle x_u,U\rangle=\sinh(\theta),\hspace*{.5cm}\langle x_v,U\rangle=0,$$
or equivalently
$$\langle x,U\rangle_u=\sinh(\theta),\hspace*{.5cm}\langle x,U\rangle_v=0.$$
Then
$$\langle x,U\rangle=\sinh(\theta) u+\mu,\ \mu\in\r.$$
The parametrization of $x$ is now (up to vertical translations)
$$x(u,v)=(x_1(u,v),x_2(u,v),-\sinh(\theta) u).$$
As $E=1$,  there exists a function $\phi:M\rightarrow\r$ such that
$$   x_u=(\cosh(\theta) \cos\phi(u,v),\cosh(\theta)\sin\phi(u,v),-\sinh(\theta) ).$$
Since $x_{uu}=0$, one obtains $\phi_u=0$, that is, $\phi=\phi(v)$ and hence
\begin{eqnarray*}
x_u&=&(\cosh(\theta) \cos(\phi(v)),\cosh(\theta)\sin(\phi(v)),-\sinh(\theta) )\\
&=&\cosh(\theta)(\cos(\phi(v)),\sin(\phi(v)),0)-\sinh(\theta)(0,0,1).
\end{eqnarray*}
Denoting by
$$f(v)=(\cos(\phi(v)),\sin(\phi(v)))$$
we can rewrite $x_u$ as
$$   x_u=\cosh(\theta)(f(v),0)-\sinh(\theta)(0,1).$$
We compute $x_{uv}$:
\begin{equation}\label{xuv}
x_{uv}=\cosh(\theta)(f'(v),0).
\end{equation}
An integration with respect to $u$ leads to
\begin{equation}\label{xv}
x_v=\cosh(\theta)(u f'(v)+h(v),0),
\end{equation}
where $h=h(v)$ is a smooth curve in $\r^2$. From \eqref{x2} and \eqref{xv}
$$x_{uv}=\frac{1}{\coth(\theta)u-\alpha(v)}\frac{\cosh^2(\theta)}{\sinh(\theta)}(u f'(v)+h(v),0).$$
Comparing with \eqref{xuv} one gets
$$h=-\tanh(\theta)\alpha(v) f'(v),$$
and so,
$$x_v=\cosh(\theta)\big(u-\tanh(\theta)\alpha(v)\big)(f'(v),0).$$
 The value of $x_{vv}$ is now
\begin{equation}\label{x33}
x_{vv}=\cosh(\theta)(u-\tanh(\theta)\alpha)(f''(v),0)-\sinh(\theta)\alpha'(f'(v),0).
\end{equation}
Multiplying the two expressions of $x_{vv}$ in \eqref{x3}  and \eqref{x33} by $x_u$, we conclude
$$\phi'(v)=\frac{1}{\sinh(\theta)}\varphi(v).$$
We do a change in the variable $v$ to get $\phi'=1$ for any $v$, that is, $\phi(v)=v$.
It is not difficult to see that this does not change the second derivatives of $x$ in \eqref{x1}, \eqref{x2} and \eqref{x3}. Then
\begin{eqnarray*}
x_u&=&\cosh(\theta)(\cos(v),\sin(v),0)-\sinh(\theta)(0,0,1).\\
x_v&=&\Big(\cosh(\theta)u-\sinh(\theta)\alpha(v)\Big)(-\sin(v),\cos(v),0).
\end{eqnarray*}

The above reasoning can be written by the following

\begin{theorem}\label{t-main}
\label{th:char}
Let $M$ be a constant angle spacelike surface in  Minkowski space {\sc $\e^3_1$} which is not totally geodesic.
Up to a rigid motion of the ambient space,  there exist local coordinates $u$ and $v$ such that $M$ is given
by the parametrization
\begin{equation}
\label{eq:param}
x(u,v)=\Big(u\cosh(\theta)\big(\cos(v),\sin(v)\big)+\psi(v),-u\sinh(\theta)\Big)
\end{equation}
with
\begin{equation}\label{eq-param2}
\psi(v)=\sinh(\theta)\Big(\int\alpha(v)\sin(v),-\int\alpha(v)\cos(v)\Big)
\end{equation}
where $\alpha$ is a smooth function on a certain interval $I$.
{\rm Here $\theta$ is the hyperbolic angle between the unit normal at $M$ and the fixed direction $U=(0,0,1)$.}
\end{theorem}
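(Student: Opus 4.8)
The plan is to reconstruct the immersion $x$ by integrating the two first-order expressions for $x_u$ and $x_v$ that the preceding discussion has already pinned down. By this point all of the genuinely analytic work has been done: solving \eqref{b1} and \eqref{b2} fixed $\sigma_{22}$ and $\beta$ up to the function $\alpha(v)$; the constant angle condition forced the third coordinate to equal $-\sinh(\theta)u$; and after the reparametrization achieving $\phi(v)=v$ we have the explicit partials
$$x_u=\cosh(\theta)\big(\cos(v),\sin(v),0\big)-\sinh(\theta)(0,0,1),$$
$$x_v=\big(\cosh(\theta)u-\sinh(\theta)\alpha(v)\big)\big(-\sin(v),\cos(v),0\big).$$
So the only remaining task is a double integration together with a single compatibility check.

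First I would integrate $x_u$ with respect to $u$. Since $x_u$ does not depend on $u$, this gives
$$x(u,v)=\Big(u\cosh(\theta)\big(\cos(v),\sin(v)\big),\,-u\sinh(\theta)\Big)+c(v)$$
for some vector-valued integration ``constant'' $c(v)$ depending only on $v$. The first two coordinates already carry the $u\cosh(\theta)(\cos v,\sin v)$ term of \eqref{eq:param} and the third coordinate is exactly $-u\sinh(\theta)$, so $c(v)$ must supply the remaining $\psi(v)$ in the first two slots and a constant in the third.

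Next I would determine $c(v)$ by differentiating this expression in $v$ and matching it against the known $x_v$. Differentiation produces $u\cosh(\theta)(-\sin v,\cos v,0)+c'(v)$, and comparison with the displayed $x_v$ shows that the $u$-proportional terms cancel, leaving
$$c'(v)=\sinh(\theta)\,\alpha(v)\big(\sin(v),-\cos(v),0\big).$$
Integrating componentwise yields $c(v)=\big(\psi(v),\,\mathrm{const}\big)$ with $\psi(v)=\sinh(\theta)\big(\int\alpha(v)\sin(v),-\int\alpha(v)\cos(v)\big)$, which is precisely \eqref{eq-param2}; the constant third component is absorbed into the vertical translation already allowed when normalizing $\langle x,U\rangle$ via the additive constant $\mu$. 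Assembling the pieces returns the parametrization \eqref{eq:param}.

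The step I expect to be the only real point of care is the cancellation in the previous paragraph: that the $u$-proportional part of $x_v$ agrees with the $u$-proportional part of $\partial_v(u\,x_u)$, so that a genuine $u$-independent $c(v)$ exists. This is less an obstacle than a confirmation, since it is exactly the equality $x_{uv}=x_{vu}$ already exploited to derive \eqref{b1} and \eqref{b2}, whence consistency is automatic. The only remaining subtlety is bookkeeping --- tracking the rigid motion sending $U$ to $E_3$, the vertical translation fixing $\mu$, and the reparametrization giving $\phi(v)=v$ --- all carried out above, so that the statement holds \emph{up to a rigid motion} as claimed, with $\alpha$ smooth on the interval $I$ where $\sigma_{22}$ is defined.
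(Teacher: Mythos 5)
Your proposal is correct and takes essentially the same route as the paper: the paper's proof of Theorem \ref{t-main} is exactly the computation preceding its statement, which terminates with the same expressions for $x_u$ and $x_v$, and the parametrization \eqref{eq:param}--\eqref{eq-param2} is then obtained by the integration in $u$ and $v$ that you carry out explicitly (a step the paper leaves implicit). Your compatibility check --- that the $u$-proportional terms cancel so a $u$-independent $c(v)$ exists, i.e.\ $x_{uv}=x_{vu}$ --- is indeed automatic for the reason you give, so the argument is complete.
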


\begin{proposition}
A constant angle spacelike surface is  flat.
\end{proposition}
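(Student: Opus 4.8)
The plan is to read the Gaussian curvature directly off the shape operator supplied by Theorem \ref{t1}, so almost all the work is already done. With respect to the orthonormal frame $\{e_1,e_2\}$ the Weingarten endomorphism is
$$A=\left(\begin{array}{cc}0&0\\0&-\sigma_{22}\end{array}\right),$$
so one of its eigenvalues is identically zero and hence $\det A=0$ at every point of $M$. Since $M$ is spacelike its unit normal is timelike, $\langle\xi,\xi\rangle=-1$, and the ambient space $\e_1^3$ is flat, $\widetilde R\equiv 0$. Combining these facts in the Gauss equation is the single remaining step.

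Concretely, using $\sigma(X,Y)=-\langle AX,Y\rangle\,\xi$ (recorded in Section \ref{preli}) and $\widetilde R\equiv 0$, the Gauss equation expresses the Gaussian curvature of the induced Riemannian metric as
$$K=\langle R(e_1,e_2)e_2,e_1\rangle=\langle\sigma(e_1,e_2),\sigma(e_1,e_2)\rangle-\langle\sigma(e_1,e_1),\sigma(e_2,e_2)\rangle.$$
Each inner product on the right is a multiple of $\langle\xi,\xi\rangle=-1$, so this collapses to $K=\sigma_{11}\sigma_{22}-\sigma_{12}^2=\det A$, up to the sign fixed by one's curvature convention. From the computations preceding Theorem \ref{t1} we have $\sigma_{11}=\sigma_{12}=0$; therefore $\det A=0$ and $K\equiv 0$. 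The only delicate point is bookkeeping the sign coming from $\langle\xi,\xi\rangle=-1$, but since the product $\sigma_{11}\sigma_{22}$ already carries the vanishing factor $\sigma_{11}$, that sign is irrelevant and no case analysis is needed.

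As an independent confirmation I would check flatness intrinsically from Corollary \ref{c1}. There the induced metric is $du^2+\beta^2\,dv^2$, and the analysis following \eqref{b2} gives $\beta(u,v)=\varphi(v)\big(-\coth(\theta)\,u+\alpha(v)\big)$, which is \emph{affine} in $u$, so $\beta_{uu}=0$. For a metric of this orthogonal form with $E=1$ the Gaussian curvature is $K=-\beta_{uu}/\beta$, again identically zero. Either route shows that a constant angle spacelike surface is flat, and I expect no real obstacle beyond keeping track of the sign convention in the Lorentzian Gauss equation.
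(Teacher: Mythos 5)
Your proof is correct, but it is not the argument the paper gives. You read flatness off Theorem \ref{t1}: since $\sigma_{11}=\sigma_{12}=0$, the shape operator has a zero eigenvalue, so $\det A=0$, and the Gauss equation (which in this Lorentzian setting, with $\langle\xi,\xi\rangle=-1$, gives $K=-\det A$) yields $K\equiv 0$; your hedge on the sign is harmless for exactly the reason you state. Your intrinsic double-check via $K=-\beta_{uu}/\beta$ is also valid, with the small caveat that the formula $\beta=\varphi(v)\big(-\coth(\theta)u+\alpha(v)\big)$ was derived under $\sigma_{22}\neq 0$; in the excluded case Proposition \ref{pr-f} makes the surface a plane, or one can note $\beta_u=0$ there, so flatness holds anyway. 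The paper instead gives a self-contained pointwise argument that never invokes Theorem \ref{t1}, the frame $e_1=U^\top/|U^\top|$, or the coordinates of Corollary \ref{c1}: differentiating the constant function $\langle\xi,U\rangle$ along principal directions $v_i(p)$ gives $\lambda_i(p)\langle v_i(p),U\rangle=0$; if both principal curvatures were nonzero at some $p$, then by continuity $\langle v_1,U\rangle=\langle v_2,U\rangle=0$ on a neighbourhood of $p$, so $U$ would be normal there, forcing $\theta=0$, a contradiction; hence $\lambda_1\lambda_2\equiv 0$. Both arguments rest on the same mechanism (constancy of the angle degenerates the Weingarten map), but they buy different things: your route is more economical, since flatness falls out of structure already established, and the intrinsic check is a nice independent confirmation; the paper's route makes the proposition logically independent of the normal-form computations, needing only the definition of a constant angle surface and diagonalizability of $A$, and it exposes the geometry directly --- two nonvanishing principal curvatures would force the fixed timelike direction to be normal to the surface.
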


\begin{proof} At each point $p\in M$, we consider $\{v_1(p),v_2(p)\}$  a basis of eigenvectors of the Weingarten endomorphism $A_p$. In particular,
$\lambda_i(p)=-\sigma_{ii}(p)$.
As the function $\langle \xi,U\rangle$ is constant, a differentiation along $v_i(p)$ yields
$\langle \overset{\sim}{\nabla}_{v_i(p)}\xi,U\rangle=0$, $i=1,2$. Using \eqref{vij2}, we obtain
$$\lambda_1(p)\langle v_1(p),U\rangle=\lambda_2(p)\langle v_2(p),U\rangle=0.$$
Assume that at the point $p$, $\lambda_1(p)\lambda_2(p)\not=0$.
By using the  continuity of the principal curvature functions, we have
 $\langle v_1(q),U\rangle=\langle v_2(q),U\rangle=0$ for every point $q$ in a neighbourhood ${\cal N}_p$ of $p$.
 This means that $U$ is a normal vector in ${\cal N}_p$ and hence it follows $\theta=0$: contradiction. Thus
$\lambda_1(p)\lambda_2(p)=0$ for any $p$, that is, $K=0$ on $M$.
\end{proof}

As in  Euclidean space,  all flat surfaces are  characterized to be
locally isometric to planes, cones, cylinders or tangent developable
surfaces.

\begin{corollary}
\label{flat}
Any constant angle spacelike surface is isometric to a plane, a cone, a cylinder or a tangent developable surface.
\end{corollary}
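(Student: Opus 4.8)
The plan is to combine the explicit parametrization of Theorem~\ref{t-main} with the preceding Proposition and then invoke the classical structure theorem for developable surfaces. First I would observe that the parametrization \eqref{eq:param} is affine in the coordinate $u$: setting $w(v)=\big(\cosh(\theta)\cos(v),\cosh(\theta)\sin(v),-\sinh(\theta)\big)$ and $\gamma(v)=\big(\psi(v),0\big)$, one has $x(u,v)=\gamma(v)+u\,w(v)$. Hence $M$ is a ruled surface, the rulings being the straight lines $u\mapsto\gamma(v)+u\,w(v)$ obtained by fixing $v$, with ruling direction $w(v)$. This already supplies the ruled structure for free, so we do not need to argue the more delicate implication ``flat $\Rightarrow$ ruled'', which would require excluding planar points or an analyticity hypothesis.

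Second, by the preceding Proposition we know $K\equiv 0$ on $M$, so $M$ is simultaneously flat and ruled. For a ruled surface these two facts together say precisely that $M$ is \emph{developable}, that is, the tangent plane of $M$ is constant along each ruling; equivalently $\xi_u=0$, which is exactly one of the identities computed before Theorem~\ref{t-main}.

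Third, I would apply the classical classification of developable surfaces to the pair $\big(\gamma(v),w(v)\big)$. A developable surface falls locally into exactly one of four mutually exclusive types according to the behaviour of $w'(v)$ and of the striction (edge of regression) curve: if $w$ is constant then $M$ is a cylinder; if all rulings pass through a fixed point then $M$ is a cone; if the rulings are tangent to a regular curve then $M$ is the tangent developable of that curve; and in the remaining degenerate case $M$ is an open piece of a plane. Because $x$ is a spacelike immersion the induced metric is Riemannian and the intrinsic geometry of $M$ is Euclidean, so this partition is carried out by the same differentiation of the ruling field $v\mapsto w(v)$ as in $\e^3$ and yields an isometry onto one of the four model surfaces.

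The main obstacle is checking that passing from the Euclidean to the Lorentzian ambient metric does not disturb this classification. The point to verify is that developability (constancy of the tangent plane, hence of $\xi$, along each ruling) and the trichotomy constant direction / concurrent rulings / rulings tangent to an edge of regression remain exhaustive and mutually exclusive when $w(v)$ ranges over spacelike unit vectors rather than over the Euclidean unit sphere. This is confirmed by the same computation: the rank of the differential of $v\mapsto w(v)$ together with the image of the striction curve determine the type, and neither depends on the signature of the ambient metric. Once this is in place the Corollary follows at once from Theorem~\ref{t-main} and the Proposition.
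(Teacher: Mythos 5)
Your proof is correct, but it is organized differently from---and is more careful than---the paper's, which gives essentially no argument: the paper simply combines the preceding Proposition ($K\equiv 0$) with the unproved assertion that, ``as in Euclidean space'', all flat surfaces are locally planes, cones, cylinders or tangent developables. You instead extract the ruled structure $x(u,v)=\gamma(v)+u\,w(v)$ directly from the parametrization \eqref{eq:param} (an observation the paper does make, but only as a remark \emph{after} the corollary, not as part of its proof), so that the only classical input you need is the classification of \emph{developable} surfaces. That is a weaker and essentially affine statement: the trichotomy cylinder/cone/tangent developable is governed by the edge of regression, i.e.\ by how consecutive rulings meet, and does not involve the ambient scalar product, which is exactly why your signature-independence check goes through. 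This buys rigor on the two points the paper glosses over: the delicate implication ``flat $\Rightarrow$ ruled'' (which requires excluding planar points or assuming analyticity) and the transfer of the Euclidean structure theorem to the Lorentzian setting. Two small remarks. First, Theorem \ref{t-main} assumes $M$ is not totally geodesic, so you should dispose of the totally geodesic case separately (it is a plane, hence already on the list). Second, in your notation $w'(v)=\cosh(\theta)(-\sin v,\cos v,0)$ never vanishes, so the cylinder branch of your trichotomy in fact never occurs except trivially as a plane---consistent with Theorem \ref{t-cylinder}, and a point you could have noted to shorten the case analysis.
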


The fact that a constant angle (spacelike) surface is a ruled surface appears in Theorem \ref{t-main}. Exactly, the parametrization
\eqref{eq:param} writes as
$$x(u,v)=(\psi(v),0)+u\Big(\cosh(\theta)\big(\cos(v),\sin(v)\big),-\sinh(\theta)\Big),$$
which proves that our surfaces are ruled. Next we present some examples of surfaces obtained in Theorem \ref{t-main}.

{\bf Example 1.} We take different choices of the function $\alpha$ in \eqref{eq-param2}.
\begin{enumerate}
\item Let $\alpha(v)=0$. After a change of variables, $\psi(v)=(0,0)$ and
$$x(u,v)=u(\cosh(\theta)(\cos(v),\sin(v)),-\sinh(\theta)).$$
This surface is a cone with the vertex the origin and whose basis curve is a circle in a horizontal plane. See Figure \ref{fig1}, left.
\item Let $\alpha(v)=1$. Then $\psi(v)=-\sinh(\theta)(\cos(v),\sin(v))$ and
$$x(u,v)=-\sinh(\theta)(\cos(v),\sin(v),0)+u(\cosh(\theta)(\cos(v),\sin(v)),-\sinh(\theta)).$$
Again, this surface is a cone  based in a horizontal circle.
\item Consider $\alpha(v)=1/\sin(v)$. Then $\psi(v)=\sinh(\theta)(v,-\log(|\sin(v)|))$ and
$$x(u,v)=\sinh(\theta)(v,-\log(|\sin(v)|),0)+u(\cosh(\theta)(\cos(v),\sin(v)),-\sinh(\theta)).$$
 See Figure \ref{fig1}, right.
\end{enumerate}
\begin{figure}[htbp]
\begin{center}
\includegraphics[width=5cm]{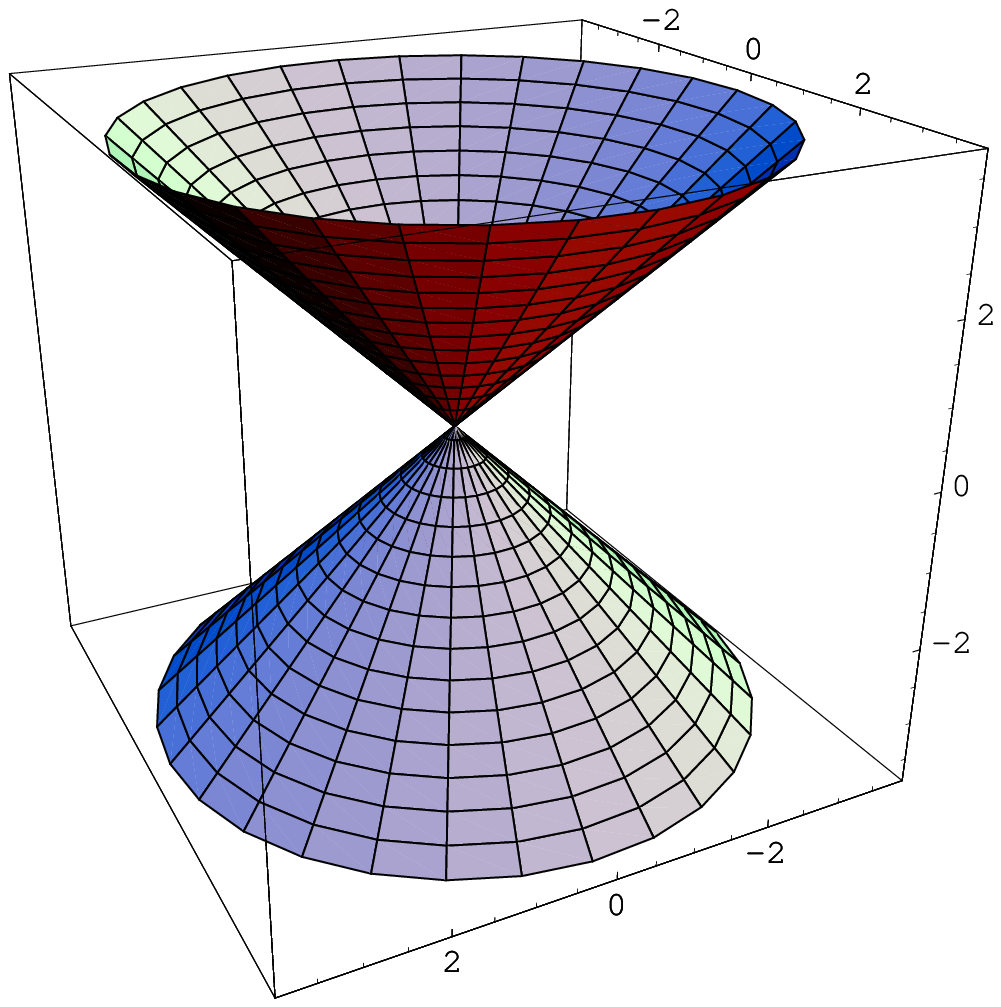}
\hspace*{2cm}
\includegraphics[width=6cm]{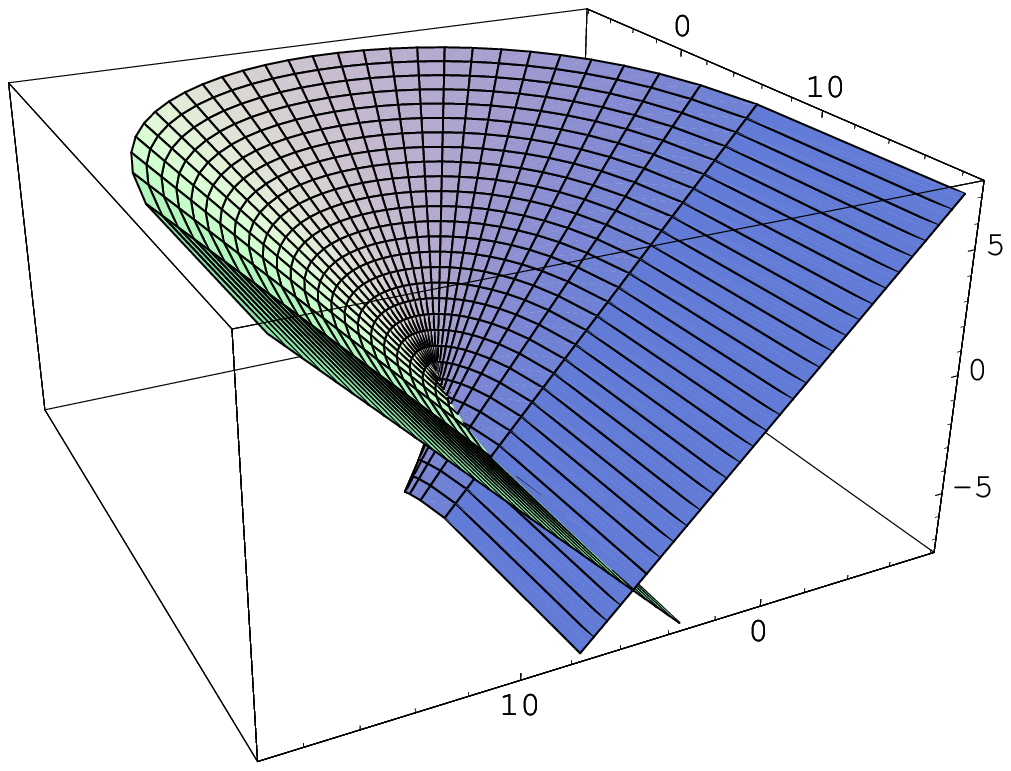}
\end{center}
\caption{Constant angle surfaces corresponding to several  choices of $\alpha$ in Theorem \ref{t-main}: $\alpha(v)=0$ (left) and $\alpha(v)=1/\sin(v)$ (right).}
\label{fig1}
\end{figure}
%%%%%%%%%%%%%%%%%%%%%%%%%%%%%%%%%%%%%%%%%%%%%%%%%%%%%%%%%%%%%
\section{Tangent developable constant angle surfaces}\label{s-tangent}
%%%%%%%%%%%%%%%%%%%%%%%%%%%%%%%%%%%%%%%%%%%%%%%%%%%%%%%%%%%%%

 In this section we study tangent developable surfaces that are constant angle surfaces
(see \cite{kn:Nis09} for the Euclidean ambient space). Given a regular curve $\gamma:I\rightarrow\e_1^3$, we define the tangent surface $M$ generated  by $\gamma$ as the surface parameterized by
$$x(s,t)=\gamma(s)+t\gamma'(s),\ \ (s,t)\in I\times\r.$$
The tangent plane at a point $(s,t)$ of $M$ is spanned by $\{x_s,x_t\}$, where
$$x_s=\gamma'(s)+t\gamma''(s),\hspace*{1cm}x_t=\gamma'(s).$$
The surface is regular at those points where $t(\gamma'(s)\times\gamma''(s))\not=0$. Without loss of generality, we will assume that $t>0$.

On the other hand, since $M$ is a  spacelike surface and $\gamma(s)\in M$, the curve $\gamma$ must be   spacelike. We parametrize
 $\gamma$ such that $s$ is the arc-length parameter, that is, $\langle\gamma'(s),\gamma'(s)\rangle=1$ for every $s$. As a consequence,
 $\gamma''(s)$ is orthogonal to $\gamma'(s)$. We point out that although $\gamma$ is a spacelike curve, the acceleration vector
 $\gamma''(s)$ can be of any causal character, that is, spacelike, timelike or lightlike. However, the surface $M$ is spacelike, which
 implies that $\gamma$ is not an arbitrary curve. Indeed, by computing the  first fundamental form $\{E,G,F\}$ of $M$ with respect to basis $\{x_s,x_t\}$, we obtain
$$\left(\begin{array}{cc}
E&F\\F&G
\end{array}\right)(s,t)=\left(\begin{array}{cc}
1+ t^2\langle\gamma''(s),\gamma''(s)\rangle&1\\1&1
\end{array}\right).$$
Thus $M$ is spacelike if and only if $EG-F^2>0$. This is equivalent to $\langle\gamma''(s),\gamma''(s)\rangle>0$, that is,  $\gamma''(s)$ is spacelike for any $s$.

The tangent vector $\t(s)$ and the normal vector $\n(s)$ are defined by
$\t(s)=\gamma'(s)$, $\n(s)=\gamma''(s)/\kappa(s)$, respectively, where $\kappa(s)=|\gamma''(s)|>0$ is the curvature of $\gamma$ at $s$.
The Frenet Serret frame of $\gamma$ at each point $s$ associates an orthonormal basis  $\{\t(s),\n(s),\b(s)\}$, where
 $\b(s)=\t(s)\times\n(s)$ is called the binormal vector (\cite{kn:Kuh,kn:lo}). We remark that $\b(s)$ is a unit timelike vector. The corresponding Frenet equations are
$$
\left\{\begin{array}{rccc}
\t'= & & \kappa \n & \\
\n'= & -\kappa\t & & +\tau\b\\
\b'= & & \tau \n. &
\end{array}\right.
$$
The function  $\tau(s)=-\langle \n'(s),\b(s)\rangle$ is called the torsion of $\gamma$ at $s$. For tangent surfaces $x$, the unit normal vector field $\xi$ to
$M$ is  $\xi=(x_s\times x_t)/\sqrt{EG-F^2}=- \b$.

In order to give the next result, recall the concept of a helix in Minkowski space. A spacelike (or timelike) curve $\gamma=\gamma(s)$
parameterized by the arc-length is called \emph{a helix} if  there exists a vector  $v\in\e_1^3$ such that the function
$\langle\gamma'(s),v\rangle$ is constant. This is equivalent to say that the function $\tau/\kappa$ is constant.

\begin{theorem} \label{tangent}
Let $M$ be a tangent developable spacelike surface generated by $\gamma$. Then $M$ is a constant angle surface if and only if $\gamma$
is a helix with $\tau^2<\kappa^2$. Moreover the direction $U$ with which $M$ makes a constant hyperbolic angle $\theta$ is given by
\begin{equation}\label{td1}
U=\frac{1}{\sqrt{\kappa^2-\tau^2}}\Big(-\tau(s) \t(s)+\kappa(s)\b(s)\Big)
\end{equation}
and the angle $\theta$ is determined by the relation
\begin{equation}\label{td2}
\cosh(\theta)=\frac{\kappa}{\sqrt{\kappa^2-\tau^2}}.
\end{equation}
\end{theorem}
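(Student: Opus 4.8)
The starting observation is that for a tangent developable the unit normal is $\xi=-\b(s)$, so $\langle\xi,U\rangle$ is a function of the arc-length parameter $s$ alone; hence $M$ has constant angle precisely when $\langle\b,U\rangle$ is constant along $\gamma$. Since all the relevant geometry is encoded by the Frenet--Serret apparatus, the plan is to expand the (a priori unknown) vector $U$ in the moving frame $\{\t,\n,\b\}$ and to translate both ``$U$ is constant'' and ``the angle is constant'' into conditions on $\kappa$ and $\tau$. Writing $U=f\,\t+g\,\n-h\,\b$ with $f=\langle U,\t\rangle$, $g=\langle U,\n\rangle$, $h=\langle U,\b\rangle$ (the minus sign in front of $h$ coming from $\langle\b,\b\rangle=-1$), imposing $U'=0$ and using the Frenet equations yields the system
\begin{eqnarray*}
f'&=&\kappa\,g,\\
g'&=&-\kappa\,f+\tau\,h,\\
h'&=&\tau\,g.
\end{eqnarray*}

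For the direct implication, the constant angle hypothesis reads $h=\langle\b,U\rangle=-\langle\xi,U\rangle=\cosh(\theta)$, which is constant and in particular satisfies $h\geq 1>0$. Thus $h'=0$, and the third equation forces $\tau\,g=0$. Once one knows $g\equiv 0$ (see the last paragraph), the first equation gives $f'=0$, so $f$ is constant, and the middle equation collapses to $\kappa f=\tau h$. Since $h\neq 0$ this gives $\tau/\kappa=f/h$ constant, i.e. $\gamma$ is a helix. Moreover $\langle U,U\rangle=f^2+g^2-h^2=f^2-h^2<0$ (as $U$ is timelike) gives $f^2<h^2$, that is $(\tau/\kappa)^2<1$, which is exactly $\tau^2<\kappa^2$. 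Normalizing $U$ and solving $\kappa f=\tau h$ together with $h^2-f^2=1$ then recovers $h=\kappa/\sqrt{\kappa^2-\tau^2}$ and $f=\tau/\sqrt{\kappa^2-\tau^2}$, i.e. formulas \eqref{td1} and \eqref{td2}, the overall sign of $U$ being fixed by requiring that $U$ and $\xi$ lie in the same timelike cone.

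For the converse I would simply check that the vector $U$ of \eqref{td1} (oriented into the timelike cone of $\xi$) does the job. Putting $c=\tau/\kappa$, which is constant with $|c|<1$, the factor $\kappa$ cancels and $U$ reduces to $\frac{1}{\sqrt{1-c^{2}}}\big(c\,\t-\b\big)$, a combination of $\t$ and $\b$ with \emph{constant} coefficients; differentiating and using $\t'=\kappa\n$, $\b'=\tau\n$ gives $U'=\frac{1}{\sqrt{1-c^{2}}}(c\kappa-\tau)\,\n=0$, so $U$ is constant. A one-line computation then gives $\langle U,U\rangle=-1$ (so $U$ is a unit timelike vector) and $\langle\b,U\rangle=\kappa/\sqrt{\kappa^2-\tau^2}$, whence $\cosh(\theta)=-\langle\xi,U\rangle=\langle\b,U\rangle$ is the constant displayed in \eqref{td2}; in particular $M$ is a constant angle surface.

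The one delicate point, and the step I expect to cost the most, is justifying $g\equiv 0$ in the direct implication: the relation $\tau g=0$ only yields $g=0$ on the open set where $\tau\neq 0$. On that set $g$ vanishes, and I would upgrade this to $g\equiv 0$ by continuity whenever $\{\tau\neq 0\}$ is dense; a boundary-value argument then rules out a consistent constant $U$ straddling an interval on which $\tau$ vanishes identically, unless $\gamma$ is planar there as well (the planar case $\tau\equiv 0$ being the trivial helix $\tau/\kappa=0$). As the statement is essentially local this causes no real difficulty, but it is precisely where signs, timelike cones, and the behaviour at the zeros of $\tau$ all have to be handled with care.
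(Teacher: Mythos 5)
Your proof is correct and follows essentially the same route as the paper: both exploit $\xi=-\b$, differentiate the constancy of $\langle\b,U\rangle$ via the Frenet equations to obtain $\tau\langle\n,U\rangle=0$, split into the planar case and the case $\langle\n,U\rangle\equiv 0$, and in the latter use the unit timelike normalization of $U$ to get the helix condition, $\tau^2<\kappa^2$ and formulas \eqref{td1}--\eqref{td2}, with the same converse construction $U=-(\tau/\kappa)\,\t+\b$ checked to be constant by Frenet. The only deviations are cosmetic: your ODE system for $(f,g,h)$ is just the componentwise form of the paper's differentiation of inner products; your same-cone normalization yields $-U$ relative to \eqref{td1} (a sign the paper itself handles inconsistently, since in the converse it computes $\cosh\theta=+\langle\xi,U\rangle/\sqrt{-\langle U,U\rangle}$ with $U$ in the cone opposite to $\xi$); and your sketched boundary/continuity argument closing the case $\tau g=0$ (which can be made precise by noting that on any component of $\{\tau\neq 0\}$ one has $\tau=(f/h)\kappa$ with $f/h$ a nonzero constant and $\kappa>0$, so $\tau$ cannot vanish at a finite boundary point) is at the same level of rigor as the paper's own local treatment of its first case.
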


\begin{proof}
\begin{enumerate}
\item Assume that  $M$ makes a constant angle with a fixed direction $U$, with $\langle U,U\rangle=-1$. Then $\langle \b(s),U\rangle$ is a
constant function $c$ with $c<0$. By differentiation with respect to $s$, and using the Frenet equation, we have
$\tau\langle\n(s),U\rangle=0$ for any $s$. If $\langle\n(s_0),U\rangle\rangle\not=0$ at some point $s_0$, then $\tau=0$ in a neighborhood of $s_0$.
This means that the binormal $\b(s)$ is a constant vector $U$, $\gamma$ is a planar curve and $\xi=-U$ is constant on $M$. In particular,
$\gamma$ is a helix with $\tau^2<\kappa^2$ and the surface is a (spacelike) affine plane. Equations \eqref{td1} and \eqref{td2} are trivial.

If $\langle\n(s),U\rangle=0$ on $I$, and because $\langle U,U\rangle=-1=\langle\t(s),U\rangle^2-c^2$,  the function
$\langle\t(s),U\rangle$ is a constant function. Therefore $\gamma$ is a helix in  $\e_1^3$ again. A differentiation of $\langle\n(s),U\rangle=0$ gives
$\langle\t(s),U\rangle=c\tau/\kappa$. Thus $-1=c^2\tau^2/\kappa^2-c^2$, which shows that $\tau^2<\kappa^2$. Moreover,
$c=-\kappa/\sqrt{\kappa^2-\tau^2}$. As $U=\langle\t(s),U\rangle\t(s)-c\b(s)$,  we get the expression \eqref{td1}. Finally (\ref{td2}) is trivial.

\item Conversely, let $\gamma=\gamma(s)$ be a helix and let $x=x(s,t)$ be the corresponding tangent surface.
We know that $\tau/\kappa$ is a constant function. If $\tau=0$,  $\gamma$ is a planar curve.
Then the tangent surface generated by $\gamma$ is a plane, which is a constant angle surface. If $\tau\not=0$, let us define
$$U(s)=-\frac{\tau}{\kappa}\t(s)+\b(s).$$
Using the Frenet equations, we have $dU/ds=0$, that is, $U$ is a constant vector. Moreover,
$\langle \xi,U\rangle=-\langle\b(s),U\rangle=1$. Thus $M$ is a constant angle surface. The hyperbolic angle $\theta$ is given by
$$\cosh(\theta)=\frac{\langle\xi,U\rangle}{\sqrt{-\langle U,U\rangle}}=\frac{\kappa}{\sqrt{\kappa^2-\tau^2}}.$$
\end{enumerate}
\end{proof}

We present two examples of constant angle surfaces that are tangent surfaces. After an isometry of the ambient space, we assume that
$U=E_3$. From \eqref{td2} if $\tau/\kappa=a$, with $|a|<1$, then
$\cosh(\theta)=1/\sqrt{1-a^2}$. Moreover $\langle\t(s),U\rangle=-\sinh(\theta)$ and
$\langle\gamma(s),E_3\rangle=-\sinh(\theta)s+b$, with $b\in\r$. After an appropriate change of variables, we take $b=0$ and we write
$$\gamma(s)=(\gamma_1(s),\gamma_2(s),\sinh(\theta)s).$$
 Because $s$ is the arc-length parameter, there exists a smooth function $\lambda(s)$ such that
 $\gamma'(s)=(\cosh(\theta)\cos(\lambda(s)),\cosh(\theta)\sin(\lambda(s)),\sinh(\theta))$. An easy computation leads to
\begin{eqnarray*}
\n(s)&=&(-\sin(\lambda(s)),\cos(\lambda(s)),0)\\
\b(s)&=&(-\sinh(\theta)\cos(\lambda(s)),-\sinh(\theta)\sin(\lambda(s)),-\cosh(\theta)).
\end{eqnarray*}
The curvature is $\kappa(s)=\cosh(\theta)\lambda'(s)$ and the torsion is $\tau(s)=-\sinh(\theta)\lambda'(s)$.

%%%%%%%%%%%%%%%%%%%%%%%%%%%%% EXAMPLES %%%%%%%%%%%%%%%%%%%%%%%%%%%%%%%%

{\bf Example 2.} We take $\lambda(s)=s$. An integration yields
$$\gamma(s)=(\cosh(\theta)\sin(s),-\cosh(\theta)\cos(s),\sinh(\theta)s).$$
 Here $\kappa(s)=\cosh(\theta)$ and
$\tau(s)=-\sinh(\theta)$ and  $\gamma$ is a helix where both the curvature and torsion functions are constant.
A picture of the curve $\gamma$ and the corresponding tangent surface appears in Figure \ref{fig2}.

\begin{figure}[htbp]
\begin{center}
\includegraphics[width=3cm]{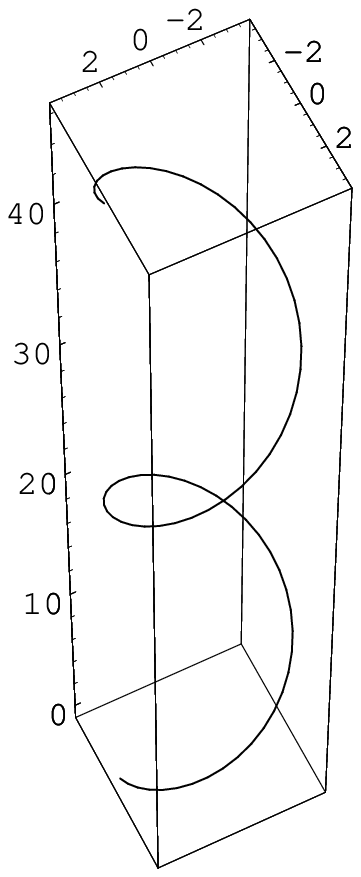}
\hspace*{2cm}
\includegraphics[width=6cm]{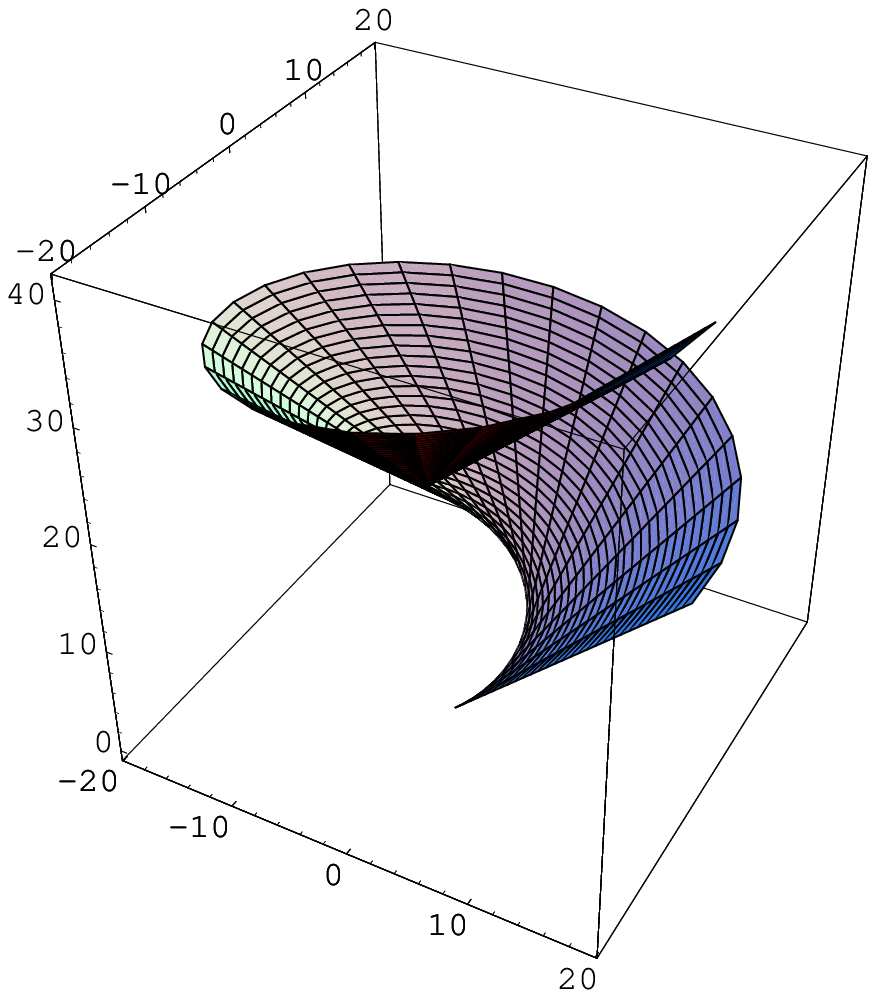}
\end{center}
\caption{A  constant angle tangent developable surface with $\kappa(s)=\cosh(\theta)$ and $\tau(s)=-\sinh(\theta)$.
Here $\theta=2$ and $U=(0,0,1)$. }
\label{fig2}
\end{figure}

{\bf Example 3.} We take $\lambda(s)=s^2$. Recall that the Fresnel functions are defined as
$$\mbox{FrS}(x)=\int_0^x \sin\big(\frac{\pi t^2}{2}\big)dt\hspace*{1cm}\mbox{FrC}(x)=\int_0^x \cos\big(\frac{\pi t^2}{2}\big)dt.$$
Then
$$\begin{array}{c}
\gamma(s)=\left(\sqrt{\frac{\pi}{2}}\cosh(\theta) \mbox{FrC}\big(\sqrt{\frac{2}{\pi}}~s\big),\sqrt{\frac{\pi}{2}}\cosh(\theta)
\mbox{FrS}\big(\sqrt{\frac{2}{\pi}}~s\big),\sinh(\theta)s\right)
\end{array}
$$
is a helix where  $\kappa(s)=2\cosh(\theta)s$ and $\tau(s)=-2\sinh(\theta)s$.
Figure \ref{fig3} shows the curve $\gamma$ and the generated tangent surface.

\begin{figure}[htbp]
\begin{center}
\includegraphics[width=3cm]{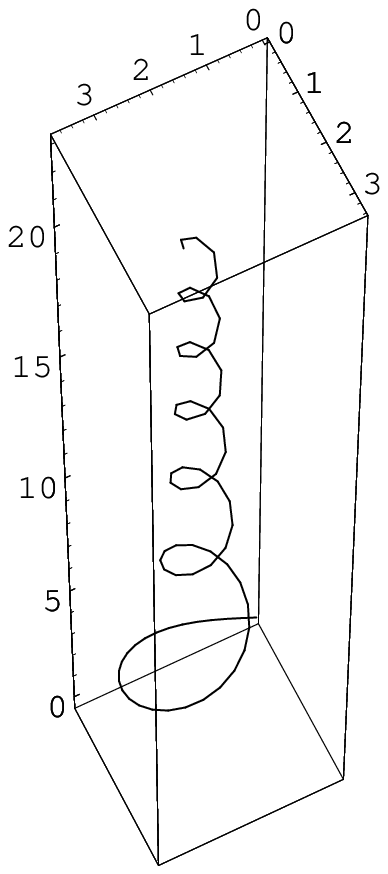}
\hspace*{2cm}
\includegraphics[width=6cm]{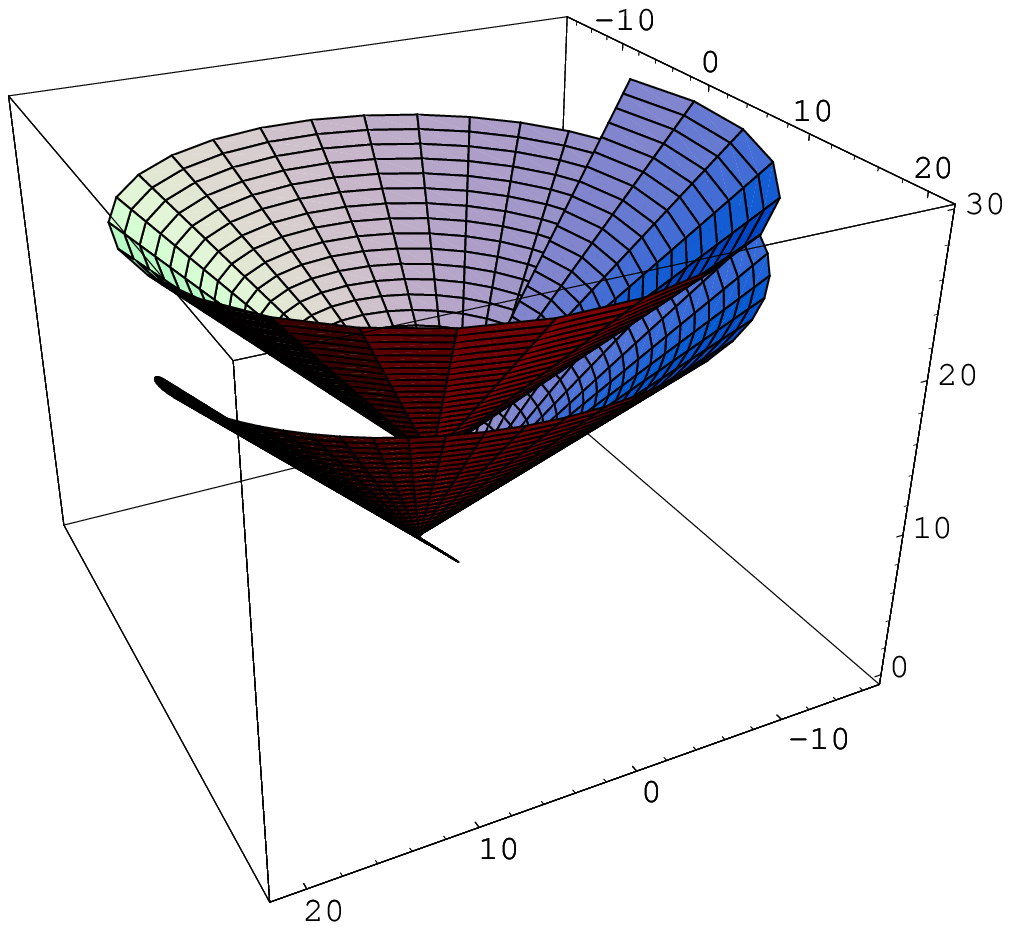}
\end{center}
\caption{A  constant angle tangent developable surface with $\kappa(s)=2s\cosh(\theta)$ and $\tau(s)=-2s\sinh(\theta)$. Here
$\theta=2$ and $U=(0,0,1)$. }\label{fig3}
\end{figure}

{\bf Remark.} We can extend the concept of constant angle surfaces for tangent developable \emph{timelike} surfaces.
Let $M$ be a tangent surface generated by a curve $\gamma$ such that $M$ is timelike. Then $\gamma$ is a spacelike
curve (with $\gamma''$ timelike) or $\gamma$ is a timelike curve (with  $\gamma''$ spacelike). Assume that $\gamma$
is parameterized by the arc-length $s$. Denote by $\{\t,\n,\b\}$ the Frenet frame of $\gamma$, that is,
$\t(s)=\gamma'(s)$, $\n(s)=\gamma''(s)/\kappa(s)$, with $\kappa(s)=|\gamma''(s)|$ and $\b(s)=\t(s)\times\n(s)$.
The Frenet equations are
$$
\left\{\begin{array}{cccc}
\t'= &  &\kappa \n &\\
\n'= & \kappa\t & & +\tau\b\\
\b'= &  &\epsilon\tau \n&
\end{array}\right.
$$
where $\tau=\langle \n',\b\rangle$ and $\langle\t(s),\t(s)\rangle=\epsilon=-\langle\n(s),\n(s)\rangle$, $\epsilon\in\{1,-1\}$. Anyway,
$\b$ is always spacelike. We assume  that there exists a fixed vector
$U\in\e_1^3$ such that the function $\langle\xi,U\rangle$ is constant. Then it is not difficult to show that this condition is
equivalent to say that $\gamma$ is a planar curve ($\tau=0$, and  $M$ is an affine plane), or $\langle\n(s),U\rangle=0$ for any $s$.
In this case, the first Frenet equation yields $\langle \t',U\rangle=0$
%$\langle U,U\rangle=\langle\t(s),U\rangle^2+\langle\b(s),U\rangle^2$
and thus, $\langle\t(s),U\rangle$ is a constant function.
This means that $\gamma$ is a helix of $\e_1^3$. This generalizes Theorem \ref{tangent} for tangent timelike surfaces.

We point out that our parametrization of $M$,
$x(s,t)=\gamma(s)+t\gamma'(s)$ where $\gamma$ is a helix given by
$$
\gamma(s)=\left(\cosh(\theta)\int\cos(\lambda(s)), \cosh(\theta)\int\sin(\lambda(s)), \sinh(\theta)s\right)
$$
does not satisfy the conditions of Corollary \ref{c1} since $F\not=0$.
In order to obtain the parametrization given in Theorem \ref{t-main}, we do
a change of parameters given by
$$u=-(s+t) ~,~ v=\pi+\lambda(s).$$
Now we obtain $x_s=-x_u+\lambda' x_v$ and $x_t=-x_u$.

But $x_t=(\cosh(\theta)\cos(\lambda(s)),\cosh(\theta)\sin(\lambda(s)),\sinh(\theta))$ or, in terms of $u$ and $v$
$$
x_u=(\cosh(\theta)\cos(v),\cosh(\theta)\sin(v),-\sinh(\theta)).
$$
Similarly $x_s=x_t+t\lambda'(s)~(-\cosh(\theta)\sin(\lambda(s)),\cosh(\theta)\cos(\lambda(s)),0)$.
It follows
$$
x_v=\big(u+\lambda^{-1}(v-\pi)\big)\cosh(\theta)~(-\sin(v),\cos(v),0).
$$
Consequently, the function $\alpha$ involved in the general formula can be expressed as
$$
\alpha(v)=-\coth(\theta)~\lambda^{-1}(v-\pi).
$$

%%%%%%%%%%%%%%%%%%%%%%%%%%%%%
\section{Constant angle cylinders and cones}\label{s-cylinder}
%%%%%%%%%%%%%%%%%%%%%%%%%%%%

In this section we consider cylinders and cones that are constant angle (spacelike) surfaces.
A ruled surface is called a \emph{cylinder} if it can be parameterized by $x(s,t)=\gamma(s)+tv$,
where $\gamma$ is a regular curve and $v$ is a fixed vector. The regularity of the cylinder is given by the
fact that $\gamma'(s)\times v\not=0$. A \emph{cone} is a ruled surface that can be parameterized by
$x(s,t)=t\gamma(s)$, where $\gamma$ is a regular curve. The vertex of the cone is the origin and the surface is regular wherever $t\big(\gamma(s)\times\gamma'(s)\big)\not=0$.

\begin{theorem}\label{t-cylinder} The only  constant angle (spacelike) cylinders are  planes.
\end{theorem}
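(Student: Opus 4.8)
The plan is to reduce the statement to the dichotomy already exploited in Proposition \ref{pr-f}: either the second fundamental form coefficient $\sigma_{22}$ vanishes identically (and $M$ is a plane), or it is nonzero somewhere (which I will rule out). Write the cylinder as $x(s,t)=\gamma(s)+tv$ with $v$ a fixed vector, which is automatically spacelike since it is tangent to the spacelike surface $M$. The first observation I would record is that the unit normal depends only on $s$: as $x_t=v$ is constant, $\xi$ is a unit vector parallel to $x_s\times x_t=\gamma'(s)\times v$, hence a function of $s$ alone. Consequently $\overset{\sim}{\nabla}_{x_t}\xi=\xi_t=0$, and by the Weingarten formula \eqref{wei} the ruling direction $v$ is a principal direction with vanishing principal curvature, i.e. $A(v)=0$.

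Next I would invoke Theorem \ref{t1}, which tells us that in the orthonormal frame $\{e_1,e_2\}$ the Weingarten map is the diagonal map with entries $0$ and $-\sigma_{22}$. Expanding $v=a\,e_1+b\,e_2$ in this frame gives $A(v)=-b\,\sigma_{22}\,e_2$, so the condition $A(v)=0$ becomes $b\,\sigma_{22}=0$. Now suppose, for contradiction, that $M$ is not a plane; then $\sigma_{22}$ is not identically zero, so by continuity $\sigma_{22}\neq0$ on some open set $\mathcal N$. On $\mathcal N$ we are forced to have $b=0$, that is $v=\pm e_1$, so that $e_1=\pm v$ is a constant vector field there. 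Feeding this into the decomposition \eqref{uu}, namely $U=\sinh(\theta)\,e_1+\cosh(\theta)\,\xi$, and using that $U$ and $\theta$ are constant, I would conclude that $\xi=\big(U-\sinh(\theta)\,e_1\big)/\cosh(\theta)$ is constant on $\mathcal N$. But a constant normal is incompatible with $\sigma_{22}\neq0$: equation \eqref{vij2} gives $\overset{\sim}{\nabla}_{e_2}\xi=\sigma_{22}\,e_2\neq0$. This contradiction forces $\sigma_{22}\equiv0$, and then Proposition \ref{pr-f} identifies $M$ as an affine spacelike plane.

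The step I expect to require the most care is the passage from $A(v)=0$ to $v\parallel e_1$: this inference is legitimate only where the two principal curvatures $0$ and $-\sigma_{22}$ are distinct, which is exactly what holds on the set $\{\sigma_{22}\neq0\}$, and it is the reason the continuity argument promoting a single non-flat point to an open neighbourhood is needed. An alternative and slightly quicker route, though it leans on the uniqueness of the rulings of a non-planar developable surface, is to apply Theorem \ref{t-main} directly: its rulings point along $r(v)=\big(\cosh(\theta)\cos v,\cosh(\theta)\sin v,-\sinh(\theta)\big)$, a genuinely $v$-dependent unit direction, whereas the rulings of a cylinder are all parallel. These two descriptions of the straight lines on $M$ can agree only when the $v$-interval degenerates to a point, i.e. precisely when $M$ is a plane.
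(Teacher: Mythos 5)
Your proof is correct, and it takes a genuinely different route from the paper's. The paper stays entirely within the Frenet theory of the directrix: it normalizes $\gamma$ to lie in the timelike plane orthogonal to the ruling $v$, observes that then $\xi=\pm\n$, and differentiates $\langle\n,U\rangle=\mathrm{const}$ twice via the Frenet equations to get first $\langle\t,U\rangle=0$ and then $\kappa\langle\n,U\rangle=0$; where $\kappa\neq0$ this forces $\langle\n,U\rangle=0$, which is impossible because $\n$ and $U$ are both timelike --- a purely causal obstruction, and exactly the point stressed in the remark after the theorem explaining why the Lorentzian result is more rigid than the Euclidean one (in $\e^3$ one simply takes $U$ along the rulings). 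Your argument instead recycles the frame machinery of Section \ref{section-main}: the constant ruling direction satisfies $A(v)=0$ because $\xi$ depends only on $s$; Theorem \ref{t1} shows that wherever $\sigma_{22}\neq0$ the kernel of $A$ is spanned by $e_1$, so $e_1=\pm v$ is constant on such a set; then \eqref{uu} freezes $\xi$ there, contradicting $\overset{\sim}{\nabla}_{e_2}\xi=\sigma_{22}\,e_2\neq0$ from \eqref{vij2}, and Proposition \ref{pr-f} finishes. What each buys: the paper's proof is self-contained at the level of curves and isolates the causal mechanism (a timelike $U$ cannot be orthogonal to the timelike principal normal), while yours shows the theorem is a formal consequence of the classification already established, with no Frenet apparatus and no reduction of the directrix to a plane, and it generalizes cleanly to any ruled constant angle surface whose rulings are parallel. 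Two small points you should make explicit: the frame $\{e_1,e_2\}$ and the decomposition \eqref{uu} presuppose $\theta\neq0$ (the paper's standing convention, harmless since $\theta=0$ gives a plane outright), and the sign in $e_1=\pm v$ should be fixed by restricting to a connected component of $\mathcal{N}$. Your alternative sketch via Theorem \ref{t-main} is plausible but, as you yourself note, rests on uniqueness of the ruling through a point where the second fundamental form does not vanish, which would itself need a justification; the frame argument avoids that.
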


\begin{proof} Let $M$ be a spacelike cylinder generated by a curve
$\gamma$ and a fixed direction $v$. As the surface is spacelike, $v$ is a
spacelike vector, which it will be assumed $|v|=1$. We can suppose that
$\gamma$ is contained in a plane $\Pi$ being  $v$ orthogonal to $\Pi$. In
particular, $\Pi$ is a timelike plane. The unit normal vector is  $\xi(s,t)=\xi(s)=\gamma'(s)\times v$.

By contradiction, we assume that $\gamma$ is not a straight-line, that is,
$\kappa(s)\not=0$ at some interval. We consider $\{\t,\n,\b\}$ the Frenet frame of $\gamma$. As $\gamma$ is a planar curve,
$\b(s)=\pm v$ and so, $\xi(s)=\pm\n(s):=\gamma''(s)/\kappa(s)$. Let $U$ be the unit (timelike) vector such that
the function $\langle\xi(s),U\rangle$ is constant, that is, $\langle\n(s),U\rangle$ is constant. By differentiation with respect to $s$,
using the Frenet equations and since $\gamma$ is a planar curve, we obtain
$\langle\t(s),U\rangle=0$ for any $s$. A new differentiation  gives  $\kappa(s)\langle\n(s),U\rangle=0$ for any $s$. As $\kappa(s)\not=0$, we have
$\langle\n(s),U\rangle=0$, for any $s$. However,   $\n(s)$ and $U$
are both timelike vectors and thus, the product $\langle\n(s),U\rangle$ can never vanish: contradiction. Consequently, $\kappa(s)=0$ for any $s$, that is,
$\gamma$ is a straight-line and then $M$ is a (spacelike) plane.
\end{proof}

\begin{remark} We point out that this result is more restrictive than the corresponding in Euclidean space {\sc $\e^3$}. In {\sc $\e^3$}, any cylinder is a
constant angle: it is suffices to take $U$ as the vector that defines the
rulings of the cylinder. The difference in  Lorentzian ambient is that our surfaces are spacelike and the vector $U$ is timelike, which imposes extra conditions.
\end{remark}

For the next result for cones,  we need recall that a (spacelike) circle in Minkowski space is a planar curve with constant curvature \cite{kn:lls,kn:lo}.
We also point that the plane $\Pi$ containing the circle can be of any causal character. Indeed, after a rigid motion of $\e_1^3$,
a spacelike circle can be viewed as follows: a Euclidean circle in a horizontal plane (if $\Pi$ is spacelike), a hyperbola in a vertical plane
(if $\Pi$ is timelike) and a parabola in a $\pi/4$-inclined plane (if $\Pi$ is lightlike).

\begin{theorem} \label{t-cone} Let $M$ be a (spacelike) cone. Then $M$ is a constant angle surface if and only if the generating curve is a circle in a
spacelike plane or it is a straight-line (and $M$ is a plane).
\end{theorem}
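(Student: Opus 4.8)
The plan is to parameterize a spacelike cone as $x(s,t)=t\gamma(s)$ and exploit the general classification from Theorem \ref{t-main}, which tells us that any non-totally-geodesic constant angle surface is ruled with rulings of a very specific slope relative to the timelike direction $U=(0,0,1)$. First I would normalize: since the cone has vertex at the origin, each ruling is the line through the origin in direction $\gamma(s)$. Comparing with the ruling direction $\big(\cosh(\theta)(\cos v,\sin v),-\sinh(\theta)\big)$ appearing in \eqref{eq:param}, the generating curve $\gamma(s)$ must (up to reparametrization and scaling) be proportional to this unit direction vector, so I would write $\gamma(s)=\big(\cosh(\theta)(\cos(v(s)),\sin(v(s))),-\sinh(\theta)\big)$ for some function $v(s)$, and argue that $\gamma$ traces out a curve on the ``base'' obtained by intersecting the cone with an affine horizontal plane.

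Next I would compute the causal character and curvature of the base curve. The key observation is that in the parametrization \eqref{eq:param}, a cone corresponds to the case where $\psi(v)$ is (up to a change of variables) trivial, as already noted in Example 1, item 1, where $\alpha(v)=0$ gives $\psi(v)=(0,0)$ and the surface is exactly a cone whose base is a Euclidean circle in a horizontal (spacelike) plane. So the forward direction amounts to: if $M$ is a constant angle cone, then matching to Theorem \ref{t-main} forces $\psi$ to vanish after reparametrization, hence $\alpha\equiv 0$, hence the base is a circle in a spacelike plane. I would make this precise by showing that the requirement that every ruling pass through the origin (the defining feature of a cone, as opposed to a general tangent developable or cylinder) is equivalent to $\psi\equiv 0$ up to the allowed changes of variable.

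For the converse, I would take $\gamma$ to be a spacelike circle lying in a spacelike plane. After a rigid motion placing this plane as a horizontal plane $x_3=c$, the circle is a Euclidean circle of some radius $r$ centered on the $x_3$-axis, so $\gamma(s)=(r\cos s, r\sin s, c)$. Then $x(s,t)=t\gamma(s)$ is a cone, and I would verify directly that its unit normal $\xi$ makes a constant hyperbolic angle with $U=(0,0,1)$ by computing $\xi$ from $x_s\times x_t$ and checking that $\langle\xi,U\rangle$ is constant in both $s$ and $t$; the homogeneity of the cone in $t$ and the rotational symmetry in $s$ make this constancy transparent.

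The main obstacle I anticipate is the forward direction's rigidity argument: showing that a constant angle cone \emph{cannot} have a base circle in a timelike or lightlike plane. By Theorem \ref{t-main} the ruling directions all have the fixed form with the \emph{same} $\theta$, so the base circle, being the locus of $\gamma(s)$ at fixed height $x_3=-\sinh(\theta)$, automatically lies in a horizontal (spacelike) plane and is a genuine Euclidean circle; thus the rival cases are excluded not by separate computation but because they are incompatible with the canonical form forced by Theorem \ref{t-main}. I would therefore frame the hard step as carefully justifying that the cone structure forces $\psi$ to be (after reparametrizing $v$) constant, so that no nontrivial $\alpha$ survives, and hence only the spacelike-plane circle occurs, with the straight-line (planar) case appearing as the degenerate totally geodesic limit excluded from Theorem \ref{t-main} but recovered separately.
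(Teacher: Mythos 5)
Your strategy---deriving the cone theorem from the canonical form of Theorem \ref{t-main}---is genuinely different from the paper's proof, which never invokes Theorem \ref{t-main} at all: the paper normalizes the generating curve to lie on the de Sitter sphere about the vertex, differentiates $\langle\xi,U\rangle=\mathrm{const}$, substitutes $\gamma''=-\gamma-\langle\gamma'',\xi\rangle\xi$, and analyzes the resulting Frenet identities. Your route could be made to work, but its central step, as you state it, is false. You claim the cone condition forces $\psi\equiv 0$, ``hence $\alpha\equiv 0$,'' up to changes of variable; the paper's own Example 1, item 2, is a counterexample: $\alpha\equiv 1$ gives a cone with $\psi(v)=-\sinh(\theta)(\cos v,\sin v)\not\equiv 0$, whose vertex is $(0,0,-\sinh(\theta)\tanh(\theta))$, not the origin. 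The error comes from pinning the vertex at the origin \emph{before} applying Theorem \ref{t-main} and then assuming the canonical coordinates keep it there; the rigid motion and integrations producing \eqref{eq:param} do not preserve the origin. What concurrency of the rulings actually gives is: all rulings pass through some point $p$; the third coordinate of \eqref{eq:param} forces the concurrency parameter $u(v)=-p_3/\sinh(\theta)$ to be a constant $u_0$, hence $\psi(v)=(p_1,p_2)-u_0\cosh(\theta)(\cos v,\sin v)$, and comparing $\psi'$ with \eqref{eq-param2} yields $\alpha\equiv u_0\coth(\theta)$ --- constant, not zero. One then finishes either by the ambient translation carrying $p$ to the origin (legitimate, since Theorem \ref{t-main} is stated up to rigid motion), or simply by noting that for constant $\alpha$ the curves $u=\mathrm{const}$ are Euclidean circles in horizontal spacelike planes.

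Two further steps need justification. First, your ``comparison'' identifying the lines through the vertex with the rulings $v=\mathrm{const}$ of \eqref{eq:param} presumes the ruling is unique; this holds where $\sigma_{22}\neq 0$ because both families of lines are tangent to the one-dimensional kernel of the Weingarten map (for the cone, $x_{tt}=0$ and $\sigma(x_s,x_t)=0$ give $A(x_t)=0$, while $A(x_u)=0$ in the canonical coordinates), but you should say so, and handle the planar case separately as you indicate. Second, in the converse you place the circle's center on the $x_3$-axis ``after a rigid motion'' while keeping the vertex at the origin; with the vertex fixed this is not a harmless normalization --- a cone over an \emph{off-axis} circle in a spacelike plane is not a constant angle surface --- so the axis-centered position is a genuine hypothesis, implicit in the paper's convention that the generating curve lies on the de Sitter sphere about the vertex. (To be fair, the paper's own proof establishes only the ``only if'' direction and leaves the converse to examples, so your explicit converse computation, restricted as it must be to the axis-centered case, is a useful addition once that restriction is acknowledged.)
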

\begin{proof}
Let $M$ be a cone, which can assume that its vertex is the origin of $\r^3$. Let $x(s,t)=t\gamma(s)$ be a parametrization of $M$, where $t\not=0$ and
$\gamma(s)\not=0$, $s\in I$. As $x_s=t\gamma'(s)$ is spacelike, $\langle\gamma'(s),\gamma'(s)\rangle>0$. On the other hand, $x_t$ must be spacelike,
this means that $\langle\gamma(s),\gamma(s)\rangle>0$. We can change $\gamma(s)$ by a proportional vector and to assume that $\gamma$ lies in the unit Minkowski sphere of
$\e_1^3$, that is, in the De Sitter space $\s^2_1=\{x\in\e_1^3;x_1^2+x_2^2-x_3^2=1\}$. Thus, $|\gamma(s)|=1$ for any $s\in I$. Without loss of generality,
we suppose that $\gamma=\gamma(s)$ is parameterized by the arc-length. Then $\gamma(s)$ and $\gamma''(s)$ are orthogonal to
$\gamma'(s)$. The unit normal vector field $\xi$ on $M$ is collinear to $x_s\times x_t$. Denoting by $\t(s)=\gamma'(s)$, we have
$\xi=\t(s)\times\gamma(s)$. In particular,
\begin{equation}\label{eq-cone0}
\gamma''(s)=-\gamma(s)-\langle\gamma''(s),\xi(s)\rangle\xi(s).
\end{equation}
  Assume that $M$ is a constant angle surface and let
$U$ be the unit timelike vector such that
$\langle\xi(s),U\rangle$ is constant. By differentiation with respect to $s$, we
have
\begin{equation}\label{eq-cone}
\langle\gamma''(s)\times\gamma(s),U\rangle=0
 \end{equation}
for any $s$. Substituting in \eqref{eq-cone} the value of $\gamma''(s)$ obtained in \eqref{eq-cone0}, we get
$$\langle\gamma''(s),\gamma'(s)\times\gamma(s)\rangle\langle\gamma'(s),U\rangle=0.$$
 We discuss the two possibilities:
\begin{enumerate}
\item If $\langle\gamma''(s),\gamma'(s)\times\gamma(s)\rangle\not=0$ at some point, then
$\langle\gamma'(s),U\rangle=0$ for any $s$. This means that $\gamma(s)$ lies in a plane orthogonal to $U$ and so, this plane must be spacelike.
Thus the acceleration $\gamma''(s)$ is a spacelike vector. Then we can take the Frenet frame of $\gamma$, namely $\{\t,\n,\b\}$, where
$\b=\t\times\n$ is a timelike vector. Moreover, $\b(s)=\pm U$.
If $\kappa(s)=0$ for any $s$, then $\gamma$ is a straight-line and the surface is a plane. On the contrary,
since $\langle\t(s),\gamma(s)\rangle=0$, by taking the derivative, one obtains
$\kappa(s)\langle\n(s),\gamma(s)\rangle+1=0$. On the other hand, because $\gamma$ is a planar curve ($\tau=0$),
 the derivative of the function $\langle \n(s),\gamma(s)\rangle$ vanishes. This means that $\langle\n(s),\gamma(s)\rangle$ is constant and so, $\kappa(s)$ is constant.
\item Assume $\langle\gamma''(s),\gamma'(s)\times\gamma(s)\rangle=0$ for any $s$. As
$\gamma(s)$ and $\gamma'(s)$ are orthogonal spacelike vectors, then $\gamma''(s)$ is a spacelike vector.
Again, we consider the Frenet frame $\{\t,\n,\b\}$ where $\b$ is a timelike vector. The above equation writes now
as $\kappa(s)\langle\b(s),\gamma(s)\rangle=0$. If $\kappa(s)=0$ for any $s$, then $\gamma$ is a straight-line again.
Suppose now $\langle\b(s),\gamma(s)\rangle=0$. Similar to the previous case,
because $\gamma(s)\in \s^2_1$, it follows $\langle\t(s),\gamma(s)\rangle=0$ and $\kappa(s)\langle\n(s),\gamma(s)\rangle+1=0$. In particular,
$\langle\n(s),\gamma(s)\rangle\not=0$ and then, the derivative of $\langle\b(s),\gamma(s)\rangle$ implies $\tau=0$, that is,
$\gamma$ is a planar curve. Finally, the derivative of $\langle\n(s),\gamma(s)\rangle$ is zero, namely
$\langle\n(s),\gamma(s)\rangle$ is constant, and then, $\kappa(s)$ is constant too.
\end{enumerate}

\end{proof}

As an example of constant angle cones, Figure \ref{fig1} shows a cone based in circle contained in a (horizontal) spacelike plane.

\end{document}